\numberwithin{equation}{section}
\theoremstyle{plain} 
\newtheorem{theorem}{Theorem}[section]
\newtheorem{proposition}[theorem]{Proposition}
\newtheorem{lemma}[theorem]{Lemma}
\newtheorem{corollary}[theorem]{Corollary}
\theoremstyle{definition}
\newtheorem{remark}[theorem]{Remark}
\newtheorem{example}[theorem]{Example}
\newtheorem{definition}[theorem]{Definition}
\newcommand{\be}{\end{eqnarray*}}
\newcommand{\ee}{\end{eqnarray*}}
\newcommand{\ben}{\begin{eqnarray}}
\newcommand{\een}{\end{eqnarray}}
\def \R{\mathbb R}
\def \N{\mathbb N}
\def \T{\mathcal T}
\def \vsm{\vskip 0.2 truecm}
\def \ds{\displaystyle}
\def\bel{\begin{equation}\label}
\def\eeq{\end{equation}}
\def \w{\omega}
\def \weak{\overset{*}{\rightharpoonup}}
\begin{document}
\title[Impulsive delay systems]{Optimal impulsive control for time delay systems}
 \thanks{This research is partially supported by the  INdAM-GNAMPA Project, codice  CUP-E53C22001930001.}

\author{Giovanni Fusco}\address{G. Fusco, Department of Mathematics `Tullio Levi-Civita`, University of Padova, Padova, 35121, Italy\\
email:\,
fusco@math.unipd.it}
\author{Monica Motta}\address{M. Motta, Department of Mathematics `Tullio Levi-Civita`, University of Padova, Padova, 35121, Italy\\
email:\,
motta@math.unipd.it}
\author{Richard Vinter}\address{R. Vinter, Department of Electrical and Electronic Engineering, Imperial College London, Exhibition Road, London SW7 2BT, UK\\
email:\,
r.vinter@imperial.ac.uk}

\begin{abstract}
We introduce discontinuous solutions to  nonlinear impulsive control systems with state time delays in the dynamics and derive necessary optimality conditions in the form of a Maximum Principle for associated optimal control problems.  In the case without delays, if the measure control is scalar valued, the corresponding discontinuous state trajectory, understood as a limit of classical state trajectories for absolutely continuous controls approximating the measure, is unique. For vector valued measure controls however, the limiting  trajectory is not unique and a full description of the control must  include additional `attached' controls affecting instantaneous state evolution at a discontinuity.  For impulsive control systems with time delays  we reveal a new phenomenon, namely that the limiting state trajectory resulting from different approximations of a given measure control needs not to be unique, even in the scalar case. Correspondingly, our framework allows for additional attached controls, even though the measure control is scalar valued.
\end{abstract}

\keywords{
Impulsive Control, Time Delay Systems, Optimal Control, Maximum Principle, Bounded Variation.}

\subjclass[2020]{
49N25, 93C43, 49K21, 26A45, 49J21}

\maketitle

\section{Introduction}
This paper provides a well-posed notion of solution 
  for a control system of the form
$$
 {\rm (DIS)} \left\{
\begin{array}{l}
\ds dx(t) =  f(x(t),x(t-h))dt +g(x(t),x(t-h))   d\mu(t) ,\; t \in [0,T] 
\\[1.0ex]
\mbox{in which }  \mu \mbox{ is a non-negative Borel measure on $[0,T]$, and } 
\\[1.0ex] 
x(0)=x_0 \mbox{ and } x(t)= \xi_0 \mbox{ for } t \in [-h,0[\,,
\end{array}
\right.
$$
and necessary conditions of optimality for associated optimal impulsive control problems. 
Here the data  comprise a constant $T >0$, functions $f:\R^n \times \R^n \rightarrow \R^n$ and $g:\R^n \times \R^n \rightarrow \R^n$, vectors $x_0 \in \R^n$ and $\xi_0 \in \R^n$.

There is an extensive literature on control systems of this kind whenever $f$ and $g$ do not depend on the delayed state, both in the  case of scalar and vector valued control measures, which includes versions of the maximum principle for optimal impulsive control problems. Representative papers include \cite{Bressan8,BR88, MR95, MiRu, Sa, SV1, PS, WZ, AKP1, KDPS15, AMR20, MS}. 
The novel feature of the class of problems treated in this paper is that the dynamic constraint contains a time delay. Since first order necessary optimality conditions are available for conventional (`non-impulsive') optimal control problems with time delay (see, e.g, \cite{Boccia} and the references therein), it might be thought that  broadening of the theory  to allow for time delays in the state would be a routine exercise. But, contrary to such expectations, the presence of a time delay gives rise to interesting and unexpected phenomena which, to the authors' knowledge, have  not been previously explored in the literature. Furthermore, the derivation of necessary conditions  poses significant challenges, because the `change of independent variable' techniques employed in the delay free literature, to reduce impulsive optimal control problems to conventional optimal control problems, cannot be adapted directly to the delay setting, owing to the fact that the analogous change of variable does not generate a time-delay optimal control problem of standard form. 

Notice that when the `control' $\mu$ in (DIS) is absolutely continuous w.r.t. Lebesgue measure, i.e. $d\mu(t) =u(t)dt$ for some integrable function $u$, the state trajectory is simply a solution to the   delay differential equation
$$
\dot x(t) = f(x(t), x(t-h)) +g(x(t), x(t-h)) u(t), \mbox{ a.e. } t \in [0,T] 
$$
satisfying the specified left end-point condition.  But if the measure $\mu$ has an atom at $\bar t$, then according to the control system, the state variable may jump at time $\bar t$.

The motivation behind the definition of `solution' to the state equation, in the case without delays,  was first provided by aerospace applications, in which (as in (DIS)) the right  side of the dynamic relation is affine in the control variable and where an `impulsive control' is an idealization of high intensity control action over a time interval of short duration.   The appropriate definition of state trajectory is then that it should be the limit of some sequence of state trajectories associated with a convergent sequence of absolutely continuous controls \cite{Lawden}.

The delay-free impulsive control literature, where the dynamic constraint takes the form
$$
\left\{
\begin{array}{l}
\ds dx(t) = \tilde f (x(t))dt +\tilde g(x(t))  d\mu(t) ,\; t \in [0,T] 
\\[1.0ex]
\ds\mbox{in which }  \mu\mbox{ is a non-negative Borel measure on } [0,T], \mbox{ and } 
\\[1.0ex]
  x(0)=x_0,
\end{array}
\right. \qquad\qquad\qquad
$$
  points the way to defining state trajectories that can be interpreted as limits of non-impulsive state trajectories. For an arbitrary impulsive control $\mu$, all limits of state trajectories corresponding to  absolutely continuous controls  approximating $\mu$ (in a sense specified in the example below) are the same.  If, furthermore,  $\mu(\{\tau\})=\w(\tau)$   at a time $\tau$,  then its state trajectory $x$ has a jump  $y(1) -x(\tau^-)$ at $\tau$, where $y:[0,1] \rightarrow \R^n$ is the solution to the differential equation
$$
\left\{
\begin{array}{l}
 \dot y(s) =   \w(\tau)\tilde g(y(s)), \mbox{ a.e. } s \in [0,1]
 \\[1.0ex]
 y(0) = x(\tau^-)\,.
\end{array}
\right.
 $$
As is well-documented, the property `all sequences of  state trajectories corresponding to some sequence of absolutely controls that approximates the nominal impulsive control $\mu$ have the same limit'  depends critically on the assumption that the impulsive control $\mu$ is scalar valued (see \cite{Bressan8}).
 If the delay-free model is modified to include a $k$-vector valued impulsive control $\mu =(\mu_1,\ldots, \mu_k)$, thus 
$$
\left\{
\begin{array}{l}
\ds dx(t) = \tilde f (x(t))dt +\sum_{i=1}^k\tilde g_i(x(t))   \mu_i(dt) ,\; t \in [0,T] 
\\
\mbox{in which $\mu_i$, $i=1,\ldots,k$, are non-negative Borel measures on } [0,T], \mbox{ and } 
\\[1.0ex]
x(0)=x_0\,,
\end{array}  
\right.
$$
the uniqueness property is lost: different approximations of the vector impulsive controls by absolutely continuous controls can give rise to different state trajectories in the limit, unless we impose rather stringent `commutativity' hypotheses on the $\tilde g_i$'s  (see \cite{BR88,BR91,MR95}). In this situation, the family of state trajectories associated with a given vector valued impulsive control  $\mu$ can be parameterized by collections of control functions attached to each point of discontinuity in the distribution of $\mu$, which determine the evolution of the state during the jump (see \cite{AKP11,KDPS15}).

Now let us return to consideration of impulsive control systems with time delay, when the impulsive control is scalar valued. We might expect, by analogy with the delay-free case, that all sequences of state trajectories associated with some sequence of absolutely continuous controls that approximates a given impulsive control, would have the   same limit and this could be defined as the state trajectory for the nominal impulsive control. But instead we encounter a new phenomenon: the uniqueness property is lost and, to capture the entire possible family of limiting state trajectories, we need to consider collections of attached controls describing the evolution of the state during the jump, in a way formerly encountered only for vector valued impulsive control systems.

\begin{example}\label{example1}
Consider the impulsive control system with scalar state $x$:
$$
\begin{cases}
dx(t) = x(t) x(t-1)   d\mu(t) \qquad \text{a.e. $t \in [0,2]$,} \\[1.0ex]
\text{in which $\mu$ is a non-negative Borel measure on $[0,2]$ and}  \\[1.0ex]
 x(t)=1  \quad \forall t \in [-1,0]\,.
\end{cases}
$$
Take the nominal  measure control to be
$
 \mu= \delta_{\{1/2\}}+ \delta_{\{3/2\}}
$,
where $\delta_{\{\bar t\}}$ denotes the unit Borel measure concentrated at $t=\bar t$.
Take $\varepsilon_i \downarrow 0$ and, for any integer $i\geq1$, let $u_i \, dt$ be the sequence of absolutely continuous measures approximating $\mu$:
$$
u_i(t) =
\begin{cases}
\varepsilon_i^{-1} &\mbox{\hspace{0.3 in}if } t \in [\frac{1}{2}, \frac{1}{2} + \varepsilon_i] \cup [\frac{3}{2}- \varepsilon_i, \frac{3}{2}]
\\
0& \qquad \text{otherwise}.
\end{cases}
$$
(The sequence $(u_i \, dt)$ approximates $\mu$ in the sense that $u_i\,dt \weak  \mu$, in the weak$^*$ $C(0,2)$-topology.)
Write $x_i$ for the corresponding state trajectories. A simple calculation reveals that $x_i(t) \rightarrow x(t)$ for all $t \in [0,2]\setminus \{\frac{1}{2},\frac{3}{2}\}$, where $x(t)=1$ if $t\in[0,1/2[$, $x(t)=e$ if $t\in[1/2,3/2[$ and $x(t)=e^2$ if $t\in[3/2,2]$.
Now let  $(\tilde u_i\, dt)$  be  a different sequence of absolutely continuous measures approximating  the nominal measure control $\mu$   given by
$$
\tilde u_i(t) =
\begin{cases}
\varepsilon_i^{-1} &\mbox{\hspace{0.3 in}if } t \in [\frac{1}{2} - \varepsilon_i, \frac{1}{2}] \cup [\frac{3}{2}, \frac{3}{2}+ \varepsilon_i]
\\
0& \mbox{\hspace{0.3 in}otherwise}.
\end{cases}
$$
Write $\tilde x_i$ for the corresponding state trajectories. We find that $\tilde x_i(t) \rightarrow \tilde x(t)$ for all $t \in [0,2]\setminus \{\frac{1}{2},\frac{3}{2}\}$, where $\tilde x(t)=1$ if $t\in[0,1/2[$, $\tilde x(t)=e$ if $t\in[1/2,3/2[$ and $\tilde x(t)=e^{1+ e}$ if $t\in[3/2,2]$.
Notice that the arcs $x$ and $\tilde x$ differ on the subinterval $[\frac{3}{2}, 2]$.
\end{example}
In this paper we provide, for the first time, a framework for studying the multiplicity of state trajectories associated with a given impulsive control, in the presence of time delays. We pose an optimal control problem where selection of a state trajectory corresponding to the chosen impulsive control is part of the optimization procedure. We give conditions for existence of solutions to this control problem and provide necessary optimality conditions in the form of a maximum principle.

Let us briefly discuss the methodological challenges of generalizing the techniques previously employed for studying impulsive control systems, when we  introduce time delays.  Take a measure control $\mu$ with associated state trajectory $x$.  The key idea in earlier work concerning delay-free systems, was to introduce a measure control dependent change of independent variable $s= \psi (t)$, satisfying $\psi(0)=0$ and $\psi(t)=t+\mu([0,t])$ for $t \in ]0,T]$,
and interpret a discontinuous  state trajectory $x$ as an absolutely continuous arc $y$, under the discontinuous change of independent variable, thus $x(t)= y(\psi (t))$. The discontinuity  in $x$ at time $t$ is captured by the evolution of $y$ on $[\psi(t^-), \psi(t^+)$] (the interval between left and right limits of $\psi$ at $t$). Each $y$ is governed by a controlled differential equation. The `controls' in this new system description generate all possible state trajectories for the original control system with measure control $\mu$, obtainable as appropriate limits of absolutely continuous state trajectories.  We use this property to study the set of impulsive state trajectories via a conventional control system. We can, for example, derive necessary conditions of optimality for  impulsive control problems in this way.

This approach cannot  be directly applied, however,  when we introduce time delays, because  the control system description that results from the above change of variable is a  controlled delay differential equation, {\it in which the time delays vary with time and are control dependent.} This complicated delay differential equation is not amenable to analysis.
Instead, we use a more subtle change of independent variable that is, in a sense, uniform over different time segments of length $h$ and which preserves constancy of the  delays under transformation.

Research is currently in progress into various extensions of this paper. These include generalizations to allow for  end-times that are not integer multiples of a single time delay, vector-valued measure controls with  range a convex cone, and for non-autonomous control systems with multiple, commensurate time delays. Conditions under which the infimum cost over classical controls is the same as that over measure controls, so called `gap conditions',  are also under investigation (see e.g. \cite{MRV, FM1, FM2} in the case without delays).

  It is worth mentioning that in the literature there are several results on the stabilizability of delayed impulsive control systems and on the optimization of some  specific related problems, but they all concern the so-called ‘impulse model’, where impulsive controls essentially reduce to a finite or countable number of jump instants, with a preassigned jump-function. The notion of impulsive control system introduced in the present paper, which includes the use of strategies with a significant freer allocation of the impulses,  might allow the development of new nonlinear models, for instance, of  fed-batch fermentation  \cite{XSSZ02,GLFX06} or of delayed neural networks  with impulsive controls \cite{LCH20}. We also mention \cite{FM23}, in which necessary optimality conditions are introduced for an impulsive control  problem with (possibly non-commensurate) time delays in the drift term alone of the dynamics.

The paper is structured as follows. In Sec. \ref{sec2} we give an appropriate definition of solution to (DIS), in Sec. \ref{sec3} we introduce an ordinary control system which can be obtained by (DIS) by means of a suitable change of independent variable, in Sec. \ref{sec4} we establish the main properties of solutions to (DIS), while in Sec. \ref{S_PMP} we associate with (DIS) a Mayer cost and an endpoint constraint and we prove a maximum principle for this optimization problem. 


 \subsection{Notation}\label{sub1.1}
Given a set $X \subseteq \R^k$, we write  $W^{1,1}([a,b];X)$,  
 $L^1([a,b];X)$, $L^\infty ([a,b];X)$, $BV([a,b];X)$, for the set of  absolutely continuous,  
 Lebesgue integrable, essentially bounded, and bounded variation functions 
 defined on $[a,b]$ and with values in $X$, respectively. We will not specify domains and  codomains when the meaning is clear  and  we will  use $\| \cdot \|_{L^{\infty}(a,b)}$,  or also   $\| \cdot \|_{L^{\infty}}$,  to denote the ess-sup norm.  As usual, given $T>0$,  $S>0$ and an increasing, surjective map $\sigma:[0,T] \to [0,S]$, $\sigma^-(t)$ and  $\sigma^+(t)$ denote the left and right limits of $\sigma$ at $t$, respectively, when $t \in (0,T)$, while  $\sigma^-(0):=0$ and $\sigma^+(T):=S$. Note that we call increasing any monotone non decreasing function.
 We denote by $C^\oplus(0,T)$ the set of Borel non-negative scalar valued measures on $[0,T]$ (from now on we will refer to such $\mu$ simply as measures). For any $\mu\in C^\oplus(0,T)$, we use both the notations $\|\mu\|_{TV}$ and $\int_{[0,T]}d\mu(t)$ for the total variation of $\mu$.
In the following,    {\em $\mu$-a.e.} means ``almost everywhere w.r.t. $\mu$", and when we do not specify $\mu$ we implicitly refer to  Lebesgue measure.
Given a sequence $(\mu_i)\subset  C^\oplus(0,T)$ and $\mu\in  C^\oplus(0,T)$, as customary we write $\mu_i\weak \mu$ if  
$\lim_i \int_{[0,T]} \Phi(t) \mu_i(dt) = \int_{[0,T]} \Phi(t) d\mu(t)$ for all  continuous functions $\Phi:[0,T]\to\R$.

\noindent
The \textit{limiting normal cone} $N_\T(\bar x)$ to a closed set $\T \subseteq \R^k$ at $\bar x\in\R^k$  is   
 \[ 
N_\T(\bar x) := \left\{  \eta\in\R^k  \text{ : } \exists x_i \stackrel{\T}{\to} \bar x,\, \eta_i \to \eta \,\,\text{ s.\,t. }\,\,  \limsup_{x \to x_i} \frac{\eta_i \cdot (x-x_i)}{|x-x_i|}  \leq 0   \ \ \forall i     \right\},
\]
 in which the notation $x_{i} \stackrel{\T}{\longrightarrow}\bar{x}$ means that   $(x_i)_{i}\subset \T$.

\section{Impulsive Controls and Extended Processes} \label{sec2}
Consider the control system
$$
{\rm (DIS)}\left\{
\begin{array}{l}
\ds dx(t) = f(x(t), x(t-h))dt +g(x(t), x(t-h))  d\mu(t) ,\; t \in [0,T] 
\\[1.0ex]
\mbox{ for some }  \mu \in C^{\oplus} (0,T) \mbox{ and } x \in BV([-h,T];\R^n)  \mbox{  satisfying } 
\\[1.0ex]
x(0)=x_0 \mbox{ and } x(t)= \xi_0 \mbox{ for } t \in[-h,0[\,,
\end{array}
\right.
$$
where $f,g: \R^n \times\R^n  \rightarrow \R^n$,  $x_0$  and  $\xi_0  \in \R^n$,  and  $h >0$, $T >0$. 
Assume:
\vsm

\noindent
(H1): $f$ and $g$ are $C^1$ and bounded functions.
\vspace{0.05 in}

\noindent
(H2): $T =Nh, \mbox{ for some integer } N >0$.

\begin{definition}[Impulsive and Strict Sense Controls]\label{imp_control_def} 
An {\em impulsive control}  $(\mu,  \{w^r\}_{ r \in [0,h]})$ comprises a function $\mu \in  C^{\oplus}(0,T)$ and a family of measurable functions $w^r =(w^r_1,\ldots, w^r_N ) : [0,1] \rightarrow [0,+\infty[^N$, parameterized by $r \in [0,h]$, with the following properties:
\begin{itemize}
\item[{\rm (i)}:] For any $r \in [0,h]$, 
$
\ds\sum_{i=1}^N w^r_i (s) =  \sum_{i=1}^N \int_0^1 w^r_i (s) ds, \ \mbox{ a.e. } s \in [0,1];
$ 
\item[{\rm (ii)}:] For any $r \in ]0,h[$, 
$
\ds\int_0^1 w^r_i (s) ds = \mu (\{r+(i-1)h\}), \ \ \mbox{for }i=1,\dots,N.
$  
\item[{\rm (iii)}:] $
 \ds\int_0^1 [w^h_i (s)+w^0_{i+1}(s)] ds = \mu(\{ih\}), \ \ \mbox{for }i=0,\dots,N
$,  where   $w^h_0\equiv0$ and  $w^0_{N+1}\equiv0$.
\end{itemize}
A {\em strict sense control} is an impulsive control $(\mu, \{w^r\}_{ r \in [0,h]})$, in which $ d\mu(t) = u(t)dt$ for some nonnegative function $u \in L^1 (0,T)$. With a slight abuse of notation, we also call $u$   strict sense control. (Notice that, for a strict sense control,  for any $r \in [0,h]$ and $i=1,\dots,N$, we have $w^r_i (s)=0$ a.e. since $d\mu(t) = u(t)dt$ has no atoms.)  
\end{definition}

%

\begin{definition}[Extended  and Strict Sense Processes]\label{def_ext}
We say that a triple $(x, \mu, \{w^{r}\}_{ r \in [0,h]})$ is  an {\em extended process} (with {\em extended state trajectory} $x$) if $x \in BV([-h,T]; \R^n)$ and $(\mu, \{w^{r}\}_{ r \in [0,h]})$ is an impulsive control such that
\vspace{0.05 in}

\noindent
{\rm (i)}:  $x(t)=\xi_0$, for $t\in[-h,0[$;
\vspace{0.05 in} 

\noindent 
(ii): for each $i =1,\ldots,N$,
$
x(t)= 
\begin{cases}
x_0 \qquad \mbox{if } i=1\\
\zeta_i^0 (1) \qquad \mbox{if } i >1
\end{cases}
$ for $t =(i-1)h,$
\begin{equation*}
\begin{split}
x(t) =  \zeta^0_i (1) &+\int_{(i-1)h}^t  f(x(t'), x(t'-h))dt'   
 + \int_{[(i-1)h,t]} g(x(t'), x(t'-h))d\mu^{c}(t') \\
& + \underset{r \in ]0,\, t- (i-1)h] }{\sum} ( \zeta^r_i (1) - x((r +(i-1)h)^-) 
 \qquad \mbox{ for } t \in ](i-1)h, ih[
 \end{split}
\end{equation*}
and  $x(T)= \zeta^h_N (1)$.
Here, for $r \in [0,h]$,  the functions $\zeta^r_1,\ldots, \zeta^r_N:[0,1]\rightarrow \R^n$ satisfy  the coupled differential equations
\begin{equation}
\label{jump_ode1}
\frac{d}{ds} \zeta^r_i (s)  \,=\,w_i^{r}(s)\, g(\zeta^r_i (s), \zeta^{r}_{i-1} (s) ), \mbox{ a.e. } s \in [0,1],\quad  \mbox{ for } i =1,\ldots, N\,,
\end{equation}
with boundary conditions
\begin{equation}
\label{jump_ode2}
\zeta^r_i (0) =
\left\{
\begin{array}{ll}
x^-(r + (i-1)h) & \mbox{if } r \in ]0,h]
\\
\zeta_{i-1}^h (1)&\mbox{if } r =0 \,.
\end{array}
\right.
\end{equation}
In these relations,  
$
\zeta^r_0(s) :=  \left\{
\begin{array}{ll}
\xi_0 &\mbox{ if  $s\in [0,1[$ or $r\in[0,h[$}
\\
x_0 &\mbox{if $s=1$ and $r=h$}
\end{array}
\right.$ 
and $\mu^{c}$ denotes the continuous component of the measure $\mu$. (Extended state trajectories have bounded variation and are right continuous on $]0,T[$.)

\noindent
An extended state process $(x, \mu,  \{w^{r}\}_{ r \in [0,h]})$ corresponding to a strict sense control $u$, also written simply $(x,u)$, is called a {\em  strict sense process}. In these circumstances  $x$, referred to as a strict sense state trajectory, is an absolutely continuous function that satisfies the delayed differential equation
$$
\dot x (t) = f(x(t), x(t-h)) + g(x(t), x(t-h) )u(t),\mbox{  a.e. }t \in [0,T]\,.
$$
 \end{definition}

Note that if, for some $i=1,\dots, N$, $t\in](i-1)h,ih[$ is not an atom of $\mu$, then $w^{t-(i-1)h}_i=0$ a.e  and the trajectory $x$ is continuous at $t$, as the function $\zeta^{t-(i-1)h}_i$  describing the jump of the state   at the instant $t$, turns out to be constant on $[0,1]$.
On the other hand, when $t=ih$ for $i=1,\dots,N-1$, the possible atom of the measure $\mu$ at $t$ (hence, the corresponding jump of the trajectory $x$) is determined by the combined effect  of both the attached controls $w^h_i$ and $w^0_{i+1}$, due to the presence of the time delay $h$.
 
\begin{remark}
In the earlier Example \ref{example1} (in which $h=1$, $N=2$, and $T = Nh=2$), we showed how different approximations of a measure control $\mu$ by absolutely continuous controls can give rise to different state trajectories. We show how these observations are consistent with the preceding definition of state trajectories corresponding to impulsive controls.  
Choose once again
$
\mu =  \delta_{\{1/2\}} +  \delta_{\{3/2\}}\,.  
$
Now consider the following impulsive control $(\mu,  \{w^r \}_{r \in [0,1]})$, in which  $w^r \equiv 0$  if $r \not= \frac{1}{2}$ and
$$
w^{r = \frac{1}{2}}_1(t) =
\begin{cases}
0 & \mbox{if } 0 \leq t \leq \frac{1}{2}
\\
2 & \mbox{if } \frac{1}{2} <t  \leq 1
\end{cases}
\qquad \mbox{ and } \qquad
w^{r = \frac{1}{2}}_2(t) =
\begin{cases}
2 & \mbox{if } 0 \leq t \leq \frac{1}{2}
\\
0 & \mbox{if } \frac{1}{2} <t  \leq 1.
\end{cases}
$$
Furthermore, consider $(\mu, \{\tilde{w}^r \}_{r \in [0,1]})$  in which   $\tilde w^r \equiv 0$, if $r \not= \frac{1}{2}$ and
$$
\tilde w^{r = \frac{1}{2}}_1(t) =
\begin{cases}
2 & \mbox{if } 0 \leq t \leq \frac{1}{2}
\\
0 & \mbox{if } \frac{1}{2} <t  \leq 1
\end{cases}
\qquad \mbox{ and } \qquad
\tilde w^{r = \frac{1}{2}}_2(t) =
\begin{cases}
0 & \mbox{if } 0 \leq t \leq \frac{1}{2}
\\
 2 & \mbox{if } \frac{1}{2} <t  \leq 1.
\end{cases}
$$
Based on the constructions implicit in the preceding definition, the state trajectories $x$ and $\tilde x$, corresponding to the impulsive controls $(\mu, \{w^r \}_{r \in [0,1]})$ and
$( \mu, \{\tilde{w}^r \}_{r \in [0,1]})$, respectively,  are
\[
 x(t) =
\begin{cases}
1   &\mbox{if } t \in [0, \frac{1}{2}[
\\
e   &\mbox{if } t \in [ \frac{1}{2}, \frac{3}{2}[
\\
e^2   &\mbox{if } t \in [\frac{3}{2}, 2]
\end{cases}
\quad \text{and} \quad
\tilde x(t) =
\begin{cases}
1   &\mbox{if } t \in [0, \frac{1}{2}[
\\
e   &\mbox{if } t \in [ \frac{1}{2}, \frac{3}{2}[
\\
e^{1+ e}   &\mbox{if } t \in [\frac{3}{2}, 2].
\end{cases}
\]
These formulae reproduce the two distinct limits of state trajectories corresponding to absolutely continuous controls, identified in the earlier analysis of this example. 
\end{remark}

\section{Reparameterization}\label{sec3}

Processes for {\rm (DIS)} will be analysed with the help of a change of independent variable that replaces possibly discontinuous state trajectories by continuous functions.  The new set of processes that arise in this way are called reparameterized processes for {\rm (DIS)}.
\begin{definition}\label{def_rep}
A {\em reparameterized process} for {\rm (DIS)} is a collection  of elements $(S, \{y_i\}, \{\alpha_i\})$, comprising a number $S >0$,  controls $\alpha_i \in L^\infty (0,S)$, $i=1,\ldots, N$, and functions $y_i \in W^{1,1}([0,S];\R^n)$, $i= 0,\ldots, N$,    that satisfy
$$
\ \left\{
\begin{array}{l}
\ds\dot y_i (s) =\Big(1- \sum_{i=1}^{N} \alpha_i (s)\Big) f(y_i (s),y_{i-1}(s)) + \alpha_i (s)g( y_i(s), y_{i-1}(s)), 
\\
\hspace{2.5 in} \mbox{ a.e.  $s \in [0,S]$, for $i=1,\ldots, N$}, 
\\
\ds\alpha_i(s) \geq 0 \mbox{ for each } i=1,\dots,N \mbox{ and } \sum_{i=1}^{N} \alpha_i(s) \leq 1, \mbox{ a.e. } s\in [0,S] ,
\\[1.0ex]
 y_0(s)=\xi_0 \ \text{for  $s\in[0,S[$}, \ y_0(S)=x_0, \ \     y_{i+1}(0) =y_{i}(S), \mbox{ for } i=0,\ldots N-1\,, 
\\ [1.1ex]
\ds \int_0^S  \Big(1- \sum_{i=1}^{N} \alpha_i (s)\Big)ds =h .
\end{array}
\right. 
$$
\end{definition}
 \begin{remark}\label{rem_uniq}
Because $f$ and  $g$ are $C^1$,  bounded functions, for any $S>0$ and all controls $\alpha_1,\dots,\alpha_N\in L^1(0,S)$  such that $\alpha_i(s) \geq 0$ for each  $i$,  $\sum_{i=1}^{N} \alpha_i(s) \leq 1$  a.e., and  $\int_0^S  (1- \sum_{i=1}^{N} \alpha_i (s))ds =h$, there exists a unique  set  of functions $y_1,\dots,y_N\in   W^{1,1}([0,S];\R^n)$ such that $(S, \{y_i\}, \{\alpha_i\})$ is a reparameterized process for {\rm (DIS)}.
\end{remark}

\begin{theorem} 
\label{Prop_3_2}
The following statements hold true:
\begin{itemize}
\item[(A):] 
Let $(S , \{y_i\}, \{\alpha_i\})$ be a reparameterized process for {\rm (DIS)}. Then, there exists an extended process 
$(x,\mu,  \{w^{r}\}_{ r \in [0,h]})$ for  {\rm (DIS)} such that $x(T)=y_N (S)$ and
\bel{TV}
\|\mu\|_{\mbox{TV}} ={\sum}_{i=1}^N\; \int_0^S \alpha_i (s)ds.
\eeq
\item[(B):]
Let $(x,\mu, \{w^{r}\}_{ r \in [0,h]})$ be an extended process for  {\rm (DIS)}. Then there exists a reparameterized  process   $(S,\{y_i\},\{\alpha_i\})$ for  {\rm (DIS)} such that $y_N (S)= x(T)$  and \eqref{TV} is satisfied.
\end{itemize}
\end{theorem}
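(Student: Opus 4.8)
The plan is to set up a single change-of-independent-variable correspondence whose two halves are exactly directions (A) and (B). The guiding idea is the one announced in the introduction: on each of the $N$ segments $[(i-1)h,ih]$ one uses the \emph{same} clock $\theta$ relating reparameterized time $s$ to local time in that segment, so that the delayed state in segment $i$, which in original time sits at $(i-2)h+\theta(s)$, is exactly $y_{i-1}$ evaluated at the same reparameterized time $s$; this is what keeps the delay constant, equal to $h$, after transformation.

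\emph{Direction (A).} Given $(S,\{y_i\},\{\alpha_i\})$, set $\theta(s):=\int_0^s\big(1-\sum_{j=1}^N\alpha_j(\sigma)\big)\,d\sigma$; by the constraint $\int_0^S(1-\sum_j\alpha_j)=h$ this is a nondecreasing Lipschitz surjection of $[0,S]$ onto $[0,h]$ with $\theta'\in[0,1]$. Let $R$ be the countable set of levels $r$ for which $\theta^{-1}(r)=[a_r,b_r]$ is a nondegenerate interval. For $r\in R$, rescale $[a_r,b_r]$ onto $[0,1]$: put $\zeta_i^r(\sigma):=y_i(a_r+(b_r-a_r)\sigma)$ and $w_i^r(\sigma):=(b_r-a_r)\,\alpha_i(a_r+(b_r-a_r)\sigma)$; for $r\notin R$ set $w^r\equiv0$. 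Since $\sum_j\alpha_j=1$ a.e.\ on $[a_r,b_r]$, the $\zeta_i^r$ solve \eqref{jump_ode1}, and $0\le w_i^r\le\sum_jw_j^r\equiv b_r-a_r$, so each $\alpha_i$ is automatically bounded by $1$ and Definition \ref{imp_control_def}(i) holds. Defining $\mu$ by declaring its continuous part on each segment to be the push-forward of $\alpha_i(s)\,ds$ over $[0,S]\setminus\bigcup_r[a_r,b_r]$ under $s\mapsto(i-1)h+\theta(s)$, and its atoms by $\mu(\{r+(i-1)h\}):=\int_{a_r}^{b_r}\alpha_i$ for $r\in R\cap(0,h)$, with the shared levels $r=0,h$ apportioned between consecutive segments as in Definition \ref{imp_control_def}(iii), makes $(\mu,\{w^r\})$ an impulsive control. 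Letting $x$ be the extended state trajectory this determines via Definition \ref{def_ext}, one verifies — by integrating $\dot y_i$ over $[0,\tau_i(t)]$, where $\tau_i(t):=\sup\{s:(i-1)h+\theta(s)\le t\}$, and splitting the integral into the flat intervals $[a_r,b_r]$ with $r\le t-(i-1)h$ and their complement — that $x(t)=y_i(\tau_i(t))$ on $[(i-1)h,ih)$: the flat-interval contributions reproduce the jump terms $\zeta_i^r(1)-x^-(r+(i-1)h)$, the complement contributes the $f$-integral and the $g\,d\mu^c$-integral, and $y_{i-1}(s)$ is precisely $x$ at the delayed original time. In particular $x(T)=y_N(S)$, and summing masses over the segments gives $\|\mu\|_{TV}=\sum_i\int_0^S\alpha_i$, i.e.\ \eqref{TV}.

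\emph{Direction (B).} Conversely, given $(x,\mu,\{w^r\})$, put $S:=h+\|\mu\|_{TV}$ and build a common clock by graph completion. Introduce the scalar measure $\nu$ on $[0,h]$ whose continuous part is the sum over $i$ of the continuous parts of $\mu$ restricted to $((i-1)h,ih)$ (read in local time), and whose atom at $r$ has mass $\sum_i\int_0^1w_i^r$ — equal to $\sum_i\mu(\{r+(i-1)h\})$ for $r\in(0,h)$ and, for $r=0,h$, fixed by Definition \ref{imp_control_def}(iii). A telescoping computation using (iii) gives $\nu([0,h])=\|\mu\|_{TV}$, so the right-continuous generalized inverse of $\tau\mapsto\tau+\nu([0,\tau])$, with its jumps filled in by affine pieces, is a nondecreasing continuous surjection $\eta:[0,S]\to[0,h]$ whose flat interval over each atom level $r$ has length $\nu(\{r\})$. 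Define $\alpha_i$ on that flat interval by undoing the rescaling of (A) (so $\sum_i\alpha_i=1$ there, by (i)), and off the flat intervals by $\alpha_i:=(1-\eta')\,(d\mu_i^c/d\nu^c)\circ\eta$, which distributes the available mass $1-\eta'$ among the segments according to the Radon--Nikodym densities, keeps $\sum_i\alpha_i=1-\eta'\le1$, and on the set where $\eta'=0$ (carrying any singular continuous part of $\mu$) pushes forward to that part. By Remark \ref{rem_uniq} there is a unique $\{y_i\}$ making $(S,\{y_i\},\{\alpha_i\})$ a reparameterized process; reversing the change of variables exactly as in (A) — checking $y_i(s)=x((i-1)h+\eta(s))$ off the flat intervals and $y_i=$ the rescaled $\zeta_i^r$ on them — yields $y_N(S)=x(T)$ and, from $\sum_i\int_0^S\alpha_i=\nu([0,h])=\|\mu\|_{TV}$, equation \eqref{TV}.

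The main obstacle is the construction of $\eta$ in (B): it must simultaneously carve out flat intervals of the lengths prescribed by the attached controls $\{w^r\}$, run a single clock across all $N$ segments so that the delay stays equal to $h$, split the single atom $\mu(\{ih\})$ correctly between the ``end of segment $i$'' control $w_i^h$ and the ``start of segment $i+1$'' control $w_{i+1}^0$, and accommodate a possible singular continuous component of $\mu$ (for which $\eta'=0$ on a set of positive measure with empty interior rather than on an interval). Once $\eta$ is in hand, everything else is a lengthy but essentially routine bookkeeping exercise resting on the change-of-variables formula for monotone functions and on the uniqueness statement of Remark \ref{rem_uniq}.
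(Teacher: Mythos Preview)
Your proposal is correct and follows essentially the same route as the paper: in (A) the paper builds the same clock $\psi=\theta$, takes its right inverse $\sigma$, rescales the flat intervals $[\sigma^-(r),\sigma^+(r)]$ to produce $w_i^r$ and $\zeta_i^r$, and defines $x(t):=y_i(\sigma(t-(i-1)h))$ directly (rather than invoking Definition~\ref{def_ext} to obtain $x$ and then verifying the formula; your phrasing is mildly circular since well-posedness of Definition~\ref{def_ext} is Proposition~\ref{P_uniq}, which in turn rests on the present theorem). In (B) the paper likewise packages the segment measures into $\phi(r)=r+\sum_i\mu_i([0,r])$ (your $\nu=\sum_i\mu_i$), takes $\eta$ as its right inverse, and defines $\alpha_i$ via Radon--Nikodym derivatives with respect to $d\phi$, which coincides with your factored formula $(1-\eta')\,(d\mu_i^c/d\nu^c)\circ\eta$; the change-of-variables tool you invoke is stated as Lemma~\ref{lem_3_3}.
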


 Proof of this theorem requires the following lemma. 
\begin{lemma}
\label{lem_3_3}
Take an increasing function $A:[0,a]\to[0,b]$ such that $A(0)=0$, $A(a)=b$, and $A$ is right continuous on $]0,a[$. Define its {\em right inverse} as the function $B:[0,b]\to[0,a]$ such that
$$
B(0):=0, \qquad B(r):=\inf\{s\in[0,a]: \ \ A(s)>r\}    \ \text{for } r\in]0,b[, \qquad B(b):=a.
$$
Then, $B$ is increasing,  right continuous on $]0,b[$, and it is continuous iff $A$ is strictly increasing.  For any $r\in[0,b]$ one has  $A(B(r))\ge r$  and, when $A$ is continuous at  $B(r)$,    $A(B(r))= r$.  Moreover,   $A$ is the right inverse of $B$, namely
$$
A(s)=\inf\{r\in[0,b]: \ \ B(r)>s\}    \ \text{for } s\in]0,a[.
$$
Furthermore, if $F:[0,a]\to\R^n$ is a Borel function, then
$$
\int_{[B(r_1),B(r_2)]}F(s) dA(s)=\int_{[r_1,r_2]}F(B(r)) dr   \quad\text{for every $0\le r_1<r_2\le b$,}
$$
where $dA(s)$ is the measure associated with the increasing function $A$.   
\end{lemma}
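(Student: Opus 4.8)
The plan is to prove the statements of Lemma~\ref{lem_3_3} essentially in the order listed, treating the final change-of-variables formula as the payoff that the earlier structural facts make almost automatic. First I would establish that $B$ is increasing and right continuous on $]0,b[$. Monotonicity is immediate from the definition: if $r_1<r_2$ then $\{s: A(s)>r_2\}\subseteq\{s: A(s)>r_1\}$, so the infima are ordered. For right continuity at $r\in]0,b[$, I would fix $r$ and take $r_k\downarrow r$; the sets $\{s: A(s)>r_k\}$ increase to $\bigcup_k\{s:A(s)>r_k\}=\{s:A(s)>r\}$ (using that $A(s)>r$ iff $A(s)>r_k$ for some large $k$), so $\inf$ over the union equals $\lim_k\inf\{s:A(s)>r_k\}$, giving $B(r_k)\to B(r)$. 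The characterization of continuity of $B$ in terms of strict monotonicity of $A$ I would handle by noting that a jump of $B$ at $r$ corresponds to an interval on which $A$ is constant (equal to $r$), and conversely.

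Next I would prove the inequality $A(B(r))\ge r$ and the equality when $A$ is continuous at $B(r)$. For $r\in]0,b[$, pick $s_k\downarrow B(r)$ with $A(s_k)>r$; right continuity of $A$ gives $A(B(r))=\lim_k A(s_k)\ge r$. If moreover $A$ is continuous at $B(r)$, then for $s<B(r)$ we have $A(s)\le r$ by definition of the infimum, and letting $s\uparrow B(r)$ gives $A(B(r))\le r$, hence equality; the boundary cases $r=0,b$ are direct from $A(0)=0$, $A(a)=b$. The symmetric claim that $A$ is the right inverse of $B$ then follows by applying exactly the same argument with the roles of $A$ and $B$ interchanged, once one checks $B$ satisfies the same hypotheses ($B(0)=0$, $B(b)=a$, increasing, right continuous on $]0,b[$), which we have just done; alternatively one verifies directly that $\inf\{r: B(r)>s\}=A(s)$ at points of continuity and extends by right continuity of both sides.

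For the integration formula, my strategy is the standard measure-theoretic one: show the two Borel measures on $[0,b]$ given by $\nu_1(E):=\int_{B^{-1}\text{-pullback}} \ldots$ — more precisely, show that the pushforward of the measure $dA$ on $[0,a]$ under the map $s\mapsto A(s)$ (suitably interpreted) equals Lebesgue measure on $[0,b]$, and that $B$ is the relevant inverse on a set of full measure. Concretely, it suffices to prove $\int_{[B(r_1),B(r_2)]}dA(s)=r_2-r_1$ for all $r_1<r_2$, i.e. $A(B(r_2)^+)-A(B(r_1)^-)=r_2-r_1$ with the appropriate one-sided limits, which follows from $A(B(r))\ge r$ together with the continuity-point equality and the fact that the measure $dA$ assigns no mass to the (at most countably many) intervals of constancy of $A$ beyond what is recorded by the endpoints. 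Having identified the measures, one passes from indicators $F=\mathbf 1_{[B(r_1),B(r_2)]}$ — equivalently $F\circ B=\mathbf 1_{[r_1,r_2]}$ up to $dr$-null sets — to simple functions by linearity, then to nonnegative Borel $F$ by monotone convergence, then to general $F\in\R^n$-valued Borel by splitting into components and positive/negative parts.

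The main obstacle I anticipate is the careful bookkeeping at discontinuities: the interplay between left/right continuity of $A$ and $B$, the flat stretches of $A$ (which $B$ "jumps over") versus the jumps of $A$ (which correspond to flat stretches of $B$, i.e. intervals $[r,r')$ all mapped by $B$ to one point $s_0$), and making sure the measure $dA$ — which charges the jump points of $A$ — is matched correctly against Lebesgue measure under the change of variable, including whether endpoints are included in $[r_1,r_2]$ versus $[B(r_1),B(r_2)]$. Getting the half-open/closed interval conventions consistent with the stated closed-interval formula, and verifying it genuinely holds as stated (rather than only up to endpoint mass), is where I would spend the most care; the density argument extending from indicators to all Borel $F$ is then routine.
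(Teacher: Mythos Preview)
Your approach is sound and considerably more self-contained than the paper's. The authors do not argue from first principles at all: they extend $A$ to an increasing right-continuous function on $[0,\infty)$, invoke Revuz--Yor (Ch.~0, Lemma~4.8) for the monotonicity, right continuity, and inversion properties of $B$, and then invoke Revuz--Yor (Ch.~0, Prop.~4.9) for the change-of-variables identity $\int G\,dA=\int G\circ B\,dr$ with nonnegative $G$, applied to $G=F_\ell^\pm\cdot\chi_{[B(r_1),B(r_2)]}$. They then pass to signed $\R^n$-valued $F$ by exactly the componentwise positive/negative decomposition you describe. So the two proofs coincide at the last step; the difference is that you propose to rederive the Revuz--Yor facts directly via the indicator $\to$ simple $\to$ monotone-convergence route, which is more elementary but longer, while the paper outsources everything to the reference.

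One caution on your indicator step: the bald identity $dA([B(r_1),B(r_2)])=r_2-r_1$ can fail when $r_1$ or $r_2$ sits strictly inside a plateau of $B$ (equivalently, in the interior of the range of a jump of $A$), because $[B(r_1),B(r_2)]$ then captures the full atom of $dA$ at the jump point while $[r_1,r_2]$ accounts for only part of it. The paper's route faces the same issue in disguise, since it must identify $\chi_{[B(r_1),B(r_2)]}\circ B$ with $\chi_{[r_1,r_2]}$; this is clean when the endpoints avoid such plateaus, which is the situation in every application of the lemma later in the paper (there $A=\psi$ or $A=\tilde\psi$ is continuous). Your instinct that the half-open/closed endpoint bookkeeping is where the real work lies is exactly right; once that is handled carefully, there is no conceptual gap in your plan.
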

\begin{proof} If we consider $A$ as the restriction to $[0,a]$ of an increasing and right continuous function, continuous outside $[0,a]$, the properties of $B$ follow from \cite[Lemma 4.8,\,Ch.\,0]{RY}.  
 For each $\ell=1,\dots,n$, let $F^+_\ell$ and $F^-_\ell$ be the positive and the negative part of the component $F_\ell$ of $F$, so that $F^+_\ell$,   $F^-_\ell\ge0$, and $F_\ell(s)=F^+_\ell(s)-F^-_\ell(s)$ for all $s\in[0,a]$. Fix $0\le r_1<r_2\le b$. Then, if we extend $F^+_\ell$,   $F^-_\ell$   as nonnegative Borel functions on $[0,+\infty[$,  \cite[Prop. 4.9,\,Ch.\,0]{RY} implies that
$$
\int_{[0,+\infty[}F^\pm_\ell(s)\chi_{[B(r_1),B(r_2)]}(s)dA(s)=\int_{[0,+\infty[}F^\pm_\ell(B(r))\chi_{[r_1,r_2]}(r)dr,
$$
from which we immediately derive
$
\int_{[B(r_1),B(r_2)]}F_\ell(s) dA(s)=\int_{[r_1,r_2]}F_\ell(B(r)) dr,
$
 for each  value of the index $\ell$.  
\end{proof}

\begin{proof}[ Proof of  Thm. \ref{Prop_3_2}]

\noindent
{\bf (A):}
Take a reparameterized process  $(S, \{y_i\}, \{\alpha_i\})$  for {\rm (DIS)}. Define the mapping $\psi:[0,S] \rightarrow \R$ to be
\bel{psi}
\psi(s) :=   \int_0^s \Big(1 -  \sum_{i =1}^{N}  \alpha_i (s') \Big)d s', \; \mbox{for } s \in [0,S]\,.
\eeq
 Directly from the definition of reparameterized processes for  {\rm (DIS)},  we know that $\psi(S)=h$.  Because $\psi$ is a   continuous,  increasing function,  by Lemma \ref{lem_3_3} its right inverse function $\sigma:[0,h]\to[0,S]$,  given by
$$
\sigma(0):=0, \qquad \sigma(r):=\inf\{s\in[0,S]: \ \ \psi(s)>r\}  \ \text{for } r\in]0,h[, \qquad \sigma(h):=S,
$$
 is strictly increasing, right continuous on $]0,h[$, and satisfies
$$
\sigma(r)\ge r, \quad  \psi(\sigma(r))=r, \quad\text{for any $r\in[0,h]$.}
$$
Moreover, it has at most a countable set  $(r_k)_{k\in\N}\subset[0,h]$ of discontinuity points, where $\sigma^- (r_k)<\sigma^+ (r_k)$, 
  which correspond to the constancy intervals of $\psi$, namely 
  $
 \psi^{-1}(\{r_k\}) = [\sigma^- (r_k), \sigma^+ (r_k)] 
 $
 for all $k\in\N$.
 Notice that, by the definition of $\psi$, for any $r\in[0,h]$, 
\begin{equation}
\label{2_1}
 \sum_{j=1}^N \alpha_j(s)=1, \mbox{ for a.e. } s \in [\sigma^- (r), \sigma^+ (r)].
  \end{equation}
(If $r\notin (r_k)_k$, we trivially have $[\sigma^- (r), \sigma^+ (r)]=\{\sigma (r)\}$.) Integrating across this relation over the subinterval $[\sigma^- (r), \sigma^+  (r)]$,  yields
 \begin{equation}\label{2_2}
 \sigma^+ (r) - \sigma^- (r) =  \sum_{j=1}^N \int_{\sigma^- (r)}^{\sigma^+ (r)}  \alpha_j (s)\,ds, \quad\text{for any $r\in[0,h]$.}
 \end{equation}
Let us also introduce the continuous,  increasing function $\tilde\psi:[0,NS]\to\R$,  obtained by an $N$-fold time advance and concatenation of  $\psi$, namely
 $$
 \tilde\psi(s):= (i-1)h+\psi(s-(i-1)S), \quad \text{for $s\in[(i-1)S,iS]$,} \quad i=1,\dots,N, 
 $$
 and its right inverse $\tilde\sigma:[0,T]\to[0,NS]$. 
Note that
\begin{equation}\label{2_2_0tilde}
 \tilde\sigma^- (r)= \sigma^- (r-(i-1)h)+(i-1)S, \quad \tilde\sigma^+ (r)= \sigma^+ (r-(i-1)h)+(i-1)S,
  \end{equation}
for any $i=1,\dots,N$ and  $r\in](i-1)h,ih[$, while 
  \begin{equation}\label{2_2_0ih}
  \begin{array}{l}
  \tilde\sigma^- (0)= \sigma^- (0)=0, \quad \tilde\sigma^+ (0)= \sigma^+ (0), \\[1.5ex]
   \tilde\sigma^- (ih)= \sigma^- (h)+(i-1)S,  \quad \tilde\sigma^+(ih) = \sigma^+ (0)+iS, \quad i=1,\dots,N-1,
    \\[1.5ex]
    \tilde\sigma^- (Nh)= \sigma^- (h)+(N-1)S, \quad \tilde\sigma^+ (Nh)= \sigma^+ (h)+(N-1)S=NS.
 \end{array}
  \end{equation}
 
We now construct an extended process $(x,\mu,  \{w^{r}\}_{ r \in [0,h]})$ for (DIS) with the desired properties. 
Consider the   functions $\tilde y\in W^{1,1}([0,NS];\R^n)$ and $\tilde\alpha\in L^1(0,NS)$, given by
 \begin{equation}\label{tilde_y_alpha}
 \begin{array}{l}
\tilde y(s)=y_i(s-(i-1)S),\quad \text{for $s\in[(i-1)S,iS]$, \  $i=1,\dots,N$} \\[1.5ex]
 \tilde \alpha(s)=\alpha_i(s-(i-1)S), \quad\text{a.e. $s\in[(i-1)S,iS[$, \ $i=1,\dots,N$.}
 \end{array}
 \end{equation}
Define $\mu \in C^{\oplus}(0,T)$ and $x:[0,T]\to\R^n$ according to the following relations:   
 \begin{equation}
\label{3}
 \mu([0,t]) :=    \int_0^{\tilde\sigma^+ (t)} \tilde\alpha(s)\, ds \quad\text{for any  $t\in [0, T]$,}
  \end{equation}
and 
 $
 x(t) :=   \tilde y(\tilde\sigma (t))
 $
for any  $t\in [0, T]$, $x(t)=\xi_0$ for $t<0$  (notice that $\tilde\sigma^+(t)=\tilde\sigma(t)$ for all $t\in]0,T]$).
Finally, take any $r \in [0,h]$. For all $i=1,\dots,N$, define $w_i^r:[0,1] \rightarrow [0,+\infty[$  and $\zeta^r_i : [0,1] \rightarrow  \R^n$ as follows: 
\begin{equation*}
\begin{split}
&w_i^r (s) := (\sigma^+ (r) - \sigma^- (r))  \alpha_i (\sigma^-(r) + s\,(\sigma^+ (r) - \sigma^- (r))) , \; \mbox{ a.e. } s \in [0,1],\\
&\zeta_i^r (s) := 
  y_i (\sigma^-(r) + s\,(\sigma^+ (r) - \sigma^- (r) )), \; \mbox{ for all } s \in [0,1].
\end{split}
\end{equation*}
Moreover, set  $\zeta_0^r (s):=\xi_0$ if $s\in[0,1[$ or $r\in[0,h[$ and $\zeta_0^{h} (1):=x_0$.
From \eqref{2_1} it follows that
$
 \sum_{i=1}^N w^r_i (s) = (\sigma^+ (r) - \sigma^- (r)  ) =  \sum_{i=1}^N \int_{\sigma^- (r)}^{\sigma^+ (r)}  \alpha_i (s)ds$ for a.e. $s \in [0,1]$. 

By changing the independent variable in the  integral, we see that
\bel{int_w}
\int_0^1 w^r_i (s) ds 
  =
 \int_{\sigma^- (r)}^{\sigma^+ (r)}\alpha_i (s)ds \quad\text{for $i=1,\dots,N$}.
\eeq
Therefore, condition (i) in Definition \ref{imp_control_def} of impulsive control is satisfied.  It can be deduced from  \eqref{3} and \eqref{2_2_0tilde} that,  for  $r\in]0,h[$ and $i=1,\dots,N$,
$$
\mu(\{r+(i-1)h\})=
 \int_{\sigma^-(r)+(i-1)S}^{\sigma^+(r)+(i-1)S}\alpha_i(s-(i-1)S)\,ds=\int_{\sigma^-(r)}^{\sigma^+(r)}\alpha_i(s)\,ds.
$$
Together with  \eqref{int_w}, this relation implies condition (ii) in Definition \ref{imp_control_def}.  Finally,  condition (iii) follows from   \eqref{3}, \eqref{2_2_0ih}, and \eqref{int_w}.  Indeed
$
\mu(\{0\})= \int_{\sigma^-(0)}^{\sigma^+(0)}\alpha_1(s)\,ds=\int_0^1 w^0_1 (s) ds$, 
$\mu(\{Nh\})= \int_{\sigma^-(h)}^{\sigma^+(h)}\alpha_N(s)\,ds=\int_0^1 w^h_N (s) ds$, 
and, for $i=1,\dots,N-1$, $\mu(\{ih\})=
 \int_{\tilde\sigma^-(ih)}^{\tilde\sigma^+(ih)}\tilde \alpha(s)\,ds$, so that:
\begin{equation*}
\begin{split}
\mu(\{ih\})&= \int_{\sigma^-(h)+(i-1)S}^{S+(i-1)S}\alpha_i(s-(i-1)S)\,ds+\int_{iS}^{\sigma^+(0)+iS}\alpha_{i+1}(s-iS)\,ds \\
&= \int_{\sigma^-(h)}^{\sigma^+(h)}\alpha_i(s)\,ds+\int_{\sigma^-(0)}^{\sigma^+(0)}\alpha_{i+1}(s)\,ds=\int_0^1 w^h_i (s) ds+\int_0^1 w^0_{i+1}(s) ds.
 \end{split}
\end{equation*}
Thus,  $(\mu   ,\{w^r\}_{ r \in [0,h]})$ is an impulsive control. Notice that, from \eqref{3}, 
$$
||\mu||_{TV} = \sum_{i=1}^N \,\int_{(i-1)S}^{iS} \alpha_i(s-(i-1)S)\,ds= \sum_{i=1}^N \,\int_{0}^{S} \alpha_i(s)\,ds.
$$ 
Take $r \in [0,h]$. Observe that, for $i=1,\dots,N$   the $\zeta^r_i$'s  are absolutely continuous functions such that  
$\frac{d}{ds} \zeta^r_i (s)  \,=\,w_i^{r}(s)\, g(\zeta^r_i (s), \zeta^{r}_{i-1} (s) )$ a.e.  $s \in [0,1]$
and  
$
\zeta^r_i (0) =
y_i( \sigma^- (r)).
$ 
 Furthermore, 
\begin{equation}
\label{y_and_zeta}
y_i(\sigma^+ (r)) -y_i(\sigma^- (r))  = \zeta_i^r (1) -\zeta^r_i (0)\,, \quad\text{for every   $i=1,\ldots, N$.}
\end{equation}
 Consider now $t\in](i-1)h,ih[$ for some $i=1,\ldots, N$.   Then   $\tilde\sigma^+(t)\in ](i-1)S,iS[$ and from the previous calculations it follows that
\begin{equation}
\label{tilde_y_and_zeta}
\begin{split}
\tilde y(\tilde\sigma^+ (t)) -\tilde y(\tilde\sigma^- (t))  &=y_i(\sigma^+ (t-(i-1)h)) -y_i(\sigma^- (t-(i-1)h)) \\
&=\zeta_i^{t-(i-1)h} (1) -\zeta_i^{t-(i-1)h} (0)\,.
\end{split}
\end{equation}
If instead $r=ih$  for some $i=1,\ldots, N$,  \eqref{tilde_y_alpha}  (together with the fact that $y_{i+1}(0) =y_{i}(S)$) yield
\begin{equation}
\label{tildeh_y_and_zeta}
\begin{split}
&\tilde y(\tilde\sigma^+ (ih)) -\tilde y(\tilde\sigma^- (ih))  =\tilde y(\sigma^+ (0)+iS) -\tilde y(\sigma^- (h)+(i-1)S) 
  \\
&\qquad 
=[ y_{i+1}(\sigma^+ (0)) -y_{i+1}(\sigma^- (0))]+[ y_i(\sigma^+ (h)) -y_i(\sigma^- (h))] 
\\
&\qquad= [\zeta_{i+1}^{0} (1) -\zeta_{i+1}^{0} (0)]+
 [\zeta_i^{h} (1) -\zeta_i^{h} (0)]\,,  \quad\text{for $i=1,\dots,N-1$,} \\
 &\tilde y(\tilde\sigma^+ (Nh)) -\tilde y(\tilde\sigma^- (Nh))  = y_N(\sigma^+ (h)) -y_N(\sigma^- (h))=\zeta_N^h (1) -\zeta^h_N (0)\,.
\end{split}
\end{equation}
 For each $t\in ](i-1)h,ih  [$ and $i =1,\dots, N$,    by \eqref{tilde_y_alpha} and \eqref{2_2_0tilde},  we have
\begin{equation*}
\begin{split}
x(t) =\tilde y(\tilde\sigma (t)) =\tilde y (\tilde\sigma^+((i-1)h))&+   \int_{\tilde\sigma^+((i-1)h)}^{\tilde \sigma^+ (t)} f(\tilde y(s), \tilde y(s-S))\frac{d\tilde\psi}{ds}(s)\,ds 
\\
& +  \int_{\tilde\sigma^+((i-1)h)}^{\tilde\sigma^+ (t)} g(\tilde y(s), \tilde y(s-S))\tilde \alpha (s) ds\,,
\end{split}
\end{equation*}
where $\tilde y (\tilde\sigma^+((i-1)h))=\zeta_i^0(1)$.
We deduce from Lemma \ref{lem_3_3} 
 and \eqref{2_2_0tilde}
   that
\begin{equation*}
\begin{split}
\int_{\tilde\sigma^+((i-1)h)}^{\tilde \sigma^+ (t)} f(\tilde y(s), \tilde y(s-S))\frac{d\tilde\psi}{ds}(s)\,ds   
&= \int_{(i-1)h}^{ t} f(\tilde y(\tilde\sigma^+(t')), \tilde y(\tilde\sigma^+(t')-S))\,dt'   \\
 &=     \int_{(i-1)h}^{ t}  f(x(t'), x(t'-h))dt'\,.
\end{split}
\end{equation*}
Define the set of nondegenerate intervals corresponding to the discontinuity points $(t_k)_{k\in\N}$ of $\tilde\sigma$ on $[0,T]$, namely $D:=\cup_{k\in\N}[\tilde\sigma^-(t_k),\tilde\sigma^+(t_k)]\subset[0,NS]$.  

\noindent
Write 
$
\tilde\alpha^0(s)=
\left\{
\begin{array}{ll} 
0 & \mbox{if } s \in D
\\
\tilde\alpha (s)  &\mbox{if } s \notin D
\end{array}
\right.
$.
The continuous component $\mu^{c}$ of  $\mu$ is the measure  with distribution function
$
\mu^{c} ([0,t]) = \int_0^{\tilde\sigma^+ (t)} \tilde\alpha^0(s')ds' 
$
for $t\in[0,T]$.
 Notice that, since $\tilde\sigma(\tilde\psi(s'))=s'$ for all $s'\notin D$,  from Lemma \ref{lem_3_3} (applied to $A=\tilde\sigma$, $B=\tilde\psi$,  and $F(t')=\tilde\alpha^0(\tilde\sigma(t'))$) for $t\in[0,T]$ it follows that
\bel{mu_cont_2}
\int_{[0,t]=[\tilde\psi(0),\tilde\psi(\tilde\sigma(t))]} \tilde\alpha^0(\tilde\sigma(t'))\,d\tilde\sigma(t') = \int_0^{\tilde\sigma (t)} \tilde\alpha^0(s')ds'=\mu^{c} ([0,t]).
\eeq
For each $t \in ](i-1)h,ih [$ and $i =1,\dots, N$, we have
\begin{equation*}
\begin{split}
&\int_{\tilde\sigma^+((i-1)h)}^{\tilde\sigma^+ (t)} g(\tilde y(s), \tilde y(s-S))\tilde\alpha (s) ds 
\\
&\quad\qquad =\int_{\tilde\sigma^+((i-1)h)}^{\tilde\sigma^+ (t)} g(\tilde y(s), \tilde y(s-S))\tilde\alpha^0(s) ds
+ \underset{ t' \in ](i-1)h,t] }{\sum} (\tilde y (\tilde\sigma^+ (t')) - \tilde y(\tilde\sigma^- (t'))).
\end{split}
\end{equation*}
 In view of \eqref{mu_cont_2}, from Lemma \ref{lem_3_3} (applied to $F(\cdot)=g(\tilde y(\tilde\sigma(\cdot)), \tilde y(\tilde\sigma(\cdot)-S))\tilde\alpha^0(\tilde\sigma(\cdot)$,  $A=\tilde\sigma$ and $B=\tilde\psi$) 
 it follows that
 $$
\begin{array}{l}
\ds\int_{[(i-1)h,t]} g(x(t'), x(t'-h))d\mu^{c} (t')\\
\qquad\qquad\ds=\int_{[\tilde\psi(\tilde\sigma((i-1)h)),\tilde\psi(\tilde\sigma (t))]} g(\tilde y(\tilde\sigma(t')), \tilde y(\tilde\sigma(t')-S))\tilde\alpha^0(\tilde\sigma(t'))\,d\tilde\sigma(t') \\
\qquad\qquad\ds= \int_{\tilde\sigma^+((i-1)h)}^{\tilde\sigma^+ (t)} g(\tilde y(s), \tilde y(s-S))\tilde\alpha^0(s) ds.
 \end{array}
 $$
 Furthermore,  \eqref{tilde_y_and_zeta}, \eqref{tildeh_y_and_zeta} imply that 
 $$
 \underset{ t' \in ](i-1)h,t] }{\sum} (\tilde y (\tilde\sigma^+ (t')) - \tilde y(\tilde\sigma^- (t'))) 
=
 \underset{ r\ \in ]0, t-(i-1)h] }{\sum} (\zeta^{r}_i (1) - \zeta^{r}_i (0)) 
 $$
 if $i=1,\dots,N-1$ and $t\in](i-1)h,ih[$, or if $i=N$ and $t\in](N-1)h,Nh]$.
 
Notice that the function $x$ is right continuous on $]0,T[$, so that the boundary conditions on the $\zeta^r_i$'s can be expressed in terms of the function $x$:
%
%
\begin{equation*}
\zeta^r_i (0) =
\left\{
\begin{array}{ll}
x^-(r + (i-1)h) & \mbox{if } r \in ]0,h]
\\
\zeta_{i-1}^h (1)&\mbox{if } r =0
\end{array}
\right.\quad i=1,\ldots, N.
\end{equation*}
 Thus these $\zeta^r_i$'s  are consistent with those appearing in Def. \ref{def_ext}.  
%
Reviewing the above relations, we see that  $(x, \mu, \{w^{r}\}_{r \in [0,h]})$ is a process for (DIS). 
\vspace{0.05 in}
 
\noindent {\bf (B):} Take an extended process $(x,\mu, \{w^{r}\}_{ r \in [0,h]})$ for (DIS).   For any $i=1,\dots, N$,  let $\mu_i\in C^{\oplus}(0,h)$ be the Borel measure with distribution function 
$$
\mu_i([0,r]):=\begin{cases} \ds \int_0^1w_i^0(s)\,ds+\mu(](i-1)h,r+(i-1)h]) \quad\text{if $r\in[0,h[$,} \\
\ds \int_0^1w_i^0(s)\,ds+\mu(](i-1)h,h[)+\int_0^1w_i^h(s)\,ds \quad\text{if $r=h$.}
 \end{cases}
$$
Notice that $\mu_i(\{r\})=\mu(\{r+(i-1)h\})$ if $r\in]0,h[$, while $\mu_i(\{0\})=\int_0^1w_i^0(s)\,ds$ and $\mu_i(\{h\})=\int_0^1w_i^h(s)\,ds$, so that  Def. \ref{imp_control_def}\,(ii)\,,(iii) imply the relations
\bel{mu_i_rel}
\begin{array}{l}
\ds \int_0^1w_i^r(s)\,ds=\mu_i(\{r\}) \quad\text{for  $r\in[0,h]$, \ $i=1,\dots,N$,} \\[1.5ex]
\ds \mu_i(\{h\})+\mu_{i+1}(\{0\})=\mu(\{ih\}), \quad \mbox{for   }i=1,\dots,N-1.
\end{array}
\eeq
Define the strictly increasing function,  
\bel{phi}
\phi(0):=0, \quad \phi(r):= r + \sum_{i=1}^N \mu_i([0,r]) \quad\mbox{for   }r \in ]0,h ]\,,
\eeq
and let $S:= \phi(h)$.   The function  $\phi$ is right continuous on $]0,h[$. Take  $\eta:[0,S] \rightarrow [0,h]$ be the right inverse  of $\phi$, as defined in Lemma \ref{lem_3_3}.  The function $\eta$ is increasing,  1-Lipschitz continuous,   such that $\eta(0)=0$  and $\eta (S)=h$.  
 Write $\{r_j \}$ for the countable set of discontinuity points of   $\phi$ and define 
$
D := \underset{j}{\cup} [ \phi^-(r_j),\phi^+(r_j)]\,.
$ 
For each $r \in [0,h]$, $\eta^{-1}(r)=[ \phi^-(r),\phi^+(r)]$ and  $\eta$ is constant exactly on the intervals in $D$.  
Then, the Lebesgue measure $\mu_L$ and  the measures $\mu_i^{c}$, $i=1,\ldots,N$, are absolutely continuous w.r.t. the measure $d\phi$. Let $m_{L}$ and $m_i^c$ be the Radon-Nicodym derivatives of $\mu_L$ and  $\mu_i^{c}$, $i=1,\ldots,N$ w.r.t. $d\phi$, respectively. Hence,  $0\leq m_L(r) \leq 1$ and $0\leq m_i^c \leq 1$, $i =1,\ldots, N$, $d\phi$-a.e. Furthermore,
\begin{equation}
\label{normalize}
m_L(r) +\sum_{i=1}^N \, m_i^c (r) =1 , \quad\mbox{$d\phi$-a.e. } r \in [0,h]\backslash \underset{j}{\cup}\{r_j \}.
\end{equation}
Now  $0=\mu_L(\{r_j\})=m_L(r_j)[\phi^+(r_j)-\phi^+(r_j)]$ and $0= \mu_i^{c} (\{r_j\})= m_i^c (r_j)[\phi^+(r_j)-\phi^+(r_j)]$, so that 
$m_L(r_j)=0$, and $m_i^c (r_j) =0$  for all $j\in\N$.

We define
the Borel measurable functions $\alpha_0: [0,S] \rightarrow \R$ and $\alpha_i: [0,S] \rightarrow \R$, $i=1,\ldots,N$ to be 
\bel{alpha}
\alpha_0(s):= 
m_L (\eta( s)), \quad\text{for $s\in[0,S]$}
\eeq
(notice that $\alpha_0(s)=0$ on $D$) and
\bel{alpha_i}
\alpha_i(s):=
\left\{\begin{array}{ll}
   m_i^c (\eta( s)) & \mbox{if } s \notin D
\\
\frac{1}{\phi^+ (r_j)- \phi^- (r_j)}\, w^{r_j}_i \left(\frac{s- \phi^-(r_j)}{\phi^+ (r_j)- \phi^- (r_j)} \right)& \mbox{if }s\in 
[\phi^-(r_j), \phi^+(r_j)] \mbox{ for some } j\,.
\end{array}
\right.
\eeq
We   show that
\begin{equation}
\label{sum}
 \alpha_0(s) +  \sum_{i=1}^N \alpha_i (s) =1 , \quad\mbox{$\mu_L$-a.e. } s \in [0,S].
\end{equation} 
This is certainly true for $\mu_L$-a.e. $s \notin D$ by (\ref{normalize}). On the other hand, for any  $j$ and  $\mu_L$-a.e. $s \in [\phi^- (r_j), \phi^+ (r_j)]$, we know that $\alpha_0(s)= 0$ and 
\begin{equation*}
\begin{split}
\sum_{i =1}^N\alpha_i (s)&=\frac{1}{\phi^+ (r_j)- \phi^- (r_j)}\,\sum_{i =1}^N w^{r_j}_i \left(\frac{s- \phi^-(r_j)}{\phi^+ (r_j)- \phi^- (r_j)} \right)
= \frac{\sum_{i =1}^N \mu_i (r_j)}{\phi^+ (r_j)- \phi^- (r_j)}\,, 
\end{split}
\end{equation*}
by Def.  \ref{imp_control_def}\,(i), and \eqref{mu_i_rel}.  Since  
 $\phi^+ (r_j) - \phi^- (r_j) = \sum_{i =1}^N \mu_i (r_j)$, we obtain that \eqref{sum} 
 is true for $\mu_L$-a.e. $s \in [0,S]$.  
  Finally, using Lemma \ref{lem_3_3} for $B=\eta$,  $A=\phi$, and $F=m_L$, for any $s\in[0,S]$, we get
 \bel{important}
 \begin{split}
 \int_0^s\left(1- \sum_{i=1}^N \alpha_i (s')\right)\,ds'&= \int_0^s\alpha_0(s')\,ds'=\int_0^{\phi(\eta(s))}m_L (\eta( s))\,ds\\
 &=\int_{[0,\eta(s)]}m_L(r)d\phi(r) =\eta(s).
\end{split}
\eeq
Therefore, the functions $\eta$ and $\phi$ coincide with the function $\psi$ defined as in \eqref{psi} and its right inverse $\sigma$, respectively. Notice that the controls $\alpha_i\in L^1(0,S)$ are nonnegative,   $\sum_{i=1}^{N} \alpha_i(s) \leq 1$  a.e.  $s\in [0,S]$, and  $\int_0^S  (1- \sum_{i=1}^{N} \alpha_i (s))ds =\eta(S)=h$.
  We next construct functions 
 $y_i :[0, S] \rightarrow \R^n$  from the process 
$(x, \mu, \{w^{r}\}_{ r \in [0,h]})$ and the corresponding functions $\zeta^r_i :[0,1] \rightarrow \R^n$  appearing in Def. \ref{def_ext}. Then, we show that $(S,\{y_i\},\{\alpha_i\})$ is the (unique) reparameterized process associated with $S>0$ and $\{\alpha_i\}$. 
%
%
%
%
 Set $y_0(s):=\xi_0$ if $s\in[0,S[$ and $y_0(S):=x_0$. For $i=1,\ldots,N$, define
$$
y_i(0):= \begin{cases} x_0 &\quad\text{if $i=1$}\\
\zeta^h_{i-1} (1) &\quad\text{if $i>1$}
\end{cases}, \qquad y_i(S):= \zeta^h_i (1)
$$
$$
y_i(s):= \begin{cases}  x((i-1)h+\eta(s)) &\quad\text{if}\quad  s\in]0,S[\setminus  D\\
  \zeta^{r_j}_i \left(\frac{s- \phi^-(r_j)}{\phi^+ (r_j)- \phi^- (r_j)} \right) &\quad\mbox{if }s\in ]0,S[\cap 
[\phi^-(r_j), \phi^+(r_j)] \mbox{ for some  $j$.}
\end{cases}
$$
We deduce from the differential equations and boundary conditions for the $\zeta^r_i$'s that, for any index value $j$  of the discontinuity points $\{r_j\}$ of $\phi$, the $y_i$'s satisfy 
\begin{equation}
\label{jump_ode3}
\frac{dy_i}{ds}  (s)  \,=\,\alpha_i (s) g(y_i (s), y_{i-1} (s) ), \mbox{ a.e. } s \in [\phi^- (r_j), \phi^+ (r_j)] \mbox{ for } i =1,\ldots, N\,,
\end{equation}
with boundary conditions
\begin{equation}
\label{jump_ode4}
y_i  (\phi^- (r_j)) = 
\zeta^r_i (0) \qquad\text{($\zeta^r_i (0)$ as in \eqref{jump_ode2}).}
\end{equation}
 For  $i =1,\ldots,N$, take any 
$s \in ]0,S[$. 
Let $t =(i-1)h+\eta(s)$. 
Then, 
\begin{equation*}
\begin{split}
x(t) =   \zeta^0_i (1) &+\int_{(i-1)h}^t  f(x(t'), x(t'-h))dt'  + \int_{[(i-1)h,t]} g(x(t'), x(t'-h))d\mu^{c}(t')
\\
&+ \underset{ \{j\,:\, r_j\in]0,t-(i-1)h] \}}{\sum}  y_i (\phi^+(r_j) )- y_i (\phi^- (r_j) ).
\end{split}
\end{equation*}
(We have made use of properties of $y_i$ on $D$ to obtain  the last term on the right.)  But, in consequence of Lemma \ref{lem_3_3}, 
\begin{equation*}
 \begin{split}
 &\int_{(i-1)h}^t  f(x(t'), x(t'-h))dt' \\
 & \ \ \ =\int_{0}^{t-(i-1)h } f(x(r'+(i-1)h), x(r'+(i-1)h-h))dr' \\
&\ \ \ = \int_{[0,t-(i-1)h]} f(x(r'+(i-1)h), x(r'+(i-1)h-h))m_L (r') d\phi(r')
\\
&\ \ \  =\int_{[0,\phi^+ (t-(i-1)h)]}   f(x(\eta (s')+(i-1)h), x  (\eta (s')+(i-1)h-h)) m_L(\eta (s'))\,ds'
\\
&\ \ \  =\int_{[0, s]}  f(y_i(s'), y_{i-1} (s'))\alpha_0(s')\,ds'\,.
\end{split}
\end{equation*}
(We have used the facts that $y_i (s')=x(\eta(s')+(i-1)h)$ for $m_L$-a.e. $s' \notin  D$, and  
 $\alpha_0(s)=0$,  a.e. on  $[\phi^- (t-(i-1)h), \phi^+ (t-(i-1)h)]$.)

\vspace{0.05 in}
\noindent Furthermore (since   $m_i^c (\eta (s))=0$ a.e. on  $[\phi^- (r), \phi^+ (r)]$, 
so that $s'=\phi(\eta(s'))$ if $s'\notin D$), applying Lemma  \ref{lem_3_3} we get
\begin{equation*}
 \begin{split}
&\int_{[(i-1)h,t]} g(x(t'), x(t'-h))d\mu^{c}(t')\\
&\ \ \ = \int_{[0,t-(i-1)h]} g(x(r'+(i-1)h), x(r'+(i-1)h-h))d\mu_i^{c}(r') \\
&\ \ \   = \int_{[0,t-(i-1)h]}  g(x(r'+(i-1)h), x(r'+(i-1)h-h))m_i^c  (r') d\phi(r')
\\
&\ \ \  =\int_{[0,\phi^+ (t-(i-1)h)]}  g(x(\eta (s')+(i-1)h), x  (\eta (s')+(i-1)h-h) ) m_i^c (\eta (s'))\,ds'
\\
&\ \ \  =\int_{[0, s]}  g(y_i(s'), y_{i-1} (s'))\alpha_i(s')   \chi_{[0,S]\backslash D}(s')ds'\,.
\end{split}
\end{equation*}
If $s\in]0,S[\setminus  D$,  then   we conclude from the above relations that
\begin{equation*}
\begin{split}
y_i(s) = \zeta^0_i (1)   &+ \int_{[0, s]}  f(y_i(s'), y_{i-1} (s'))\alpha_0 (s')\,  ds' \\
&+  \int_{[0, s]} g (y_i(s'), y_{i-1} (s'))\alpha_i(s')   \chi_{[0,S]\backslash D}(s')ds'\\
 &+ \underset{ \{j\,:\, r_j\in]0,t-(i-1)h] \}}{\sum}  y_i (\phi^+(r_j) )- y_i (\phi^- (r_j) ).
 \end{split}
 \end{equation*}
But $ \zeta^0_i (1)=y_i(0)+ (y_i (\phi^+(0) )- y_i (\phi^- (0) ))$, so
\begin{equation*}
\begin{split}
 \zeta^0_i (1)&+ \underset{ \{j\,:\, r_j\in]0,r-(i-1)h] \}}{\sum}  y_i (\phi^+(r_j) )- y_i (\phi^- (r_j) ) \\
& = y_i(0)+ \underset{ \{j\,:\, r_j\in[0,t-(i-1)h] \}}{\sum}  y_i (\phi^+(r_j) )- y_i (\phi^- (r_j) ) \\
&=y_i(0)+   \int_{[0, s]}  g(y_i(s'), y_{i-1} (s'))\alpha_i(s')   \chi_{D}(s')ds'\,.
\end{split}
\end{equation*}
It follows that, for each $i$, 
\begin{equation}
\label{y_eqn}
y_i(s) =y_i (0)  + \int_{[0, s]}  f(y_i(s'), y_{i-1} (s'))\alpha_0(s') ds' +
  \int_{[0, s]}  g(y_i(s'), y_{i-1} (s'))\alpha_i(s') ds' .
 \end{equation}
 Now assume that   $s \in ]0,S[\cap[ \phi^- (r_j)  , \phi^+ (r_j) ]$ for some index value $j$. Since $\phi^- (r_j)$ is a density point of $[0,S]\backslash D$ and the $y_i$'s are continuous, we can deduce from  (\ref{y_eqn}) that, for each $i$,
\begin{equation*}
\begin{split}
y_i(\phi^- (r_j)) =y_i (0)  &+ \int_{[0, \phi^- (r_j)]}  f(y_i(s'), y_{i-1} (s'))\alpha_i(s')  ds'
\\
& +
  \int_{[0,\phi^- (r_j)]}  g(y_i(s'), y_{i-1} (s'))\alpha_i(s')\, ds' .
\end{split}
\end{equation*}
 Exploiting the facts that
\bel{jump_y}
y_i( s) -y_i(\phi^- (r_j)) = \int_{\phi^- (r_j)}^s \alpha_i (s')g(y_i (s'), y_{i-1}(s'))ds'
\eeq
and $\alpha_0(s) =0$ for $\mu_L$-a.e. $s \in [ \phi^- (r_j)  , \phi^+ (r_j) ] $, we see that \eqref{y_eqn} is valid for arbitrary $s \in ]0,S[$\,.  By definition, $y_i(S)= \zeta_i^h(1)$. Hence, if $S\notin D$ or $i=N$,  then $y_i $ satisfies \eqref{y_eqn} on $]0,S]$ by continuity.  If, on the other hand,  $i\in\{1,\dots,N-1\}$ and $S \in [\phi^- (r_j), \phi^+ (r_j) ]$  for some index value $j$, then $r_j =h$ and we derive that \eqref{y_eqn} still holds on  $]0,S]$ by \eqref{jump_y}.   
Furthermore, notice that the  $y_i$'s satisfy the boundary conditions in Def.  \ref{def_rep}. In particular,  
$
y_1(0)=x_0$ and  $y_{i+1}(0)=\zeta_i^h(1)=y_i(S).
$
Reviewing the preceding relations, we see that assertion (B)  has been confirmed.
\end{proof}
 The mapping, constructed in the proof of Part (B) of the theorem, between extended and reparameterized processes, is actually invertible.
 
\begin{corollary}\label{Cor_inv} Let  ${\mathcal I}$ be the map which associates with any extended process $(x,\mu,  \{w^{r}\}_{ r \in [0,h]})$   the reparameterized process  $(S , \{y_i\}, \{\alpha_i\})$ for {\rm (DIS)} constructed  in the proof of Thm. \ref{Prop_3_2},\,{\rm (B)}.  Then, the function ${\mathcal I}$ is invertible and  the extended process $(x,\mu,  \{w^{r}\}_{ r \in [0,h]})={\mathcal I}^{-1}(S , \{y_i\}, \{\alpha_i\})$ coincides with the extended process associated with $(S , \{y_i\}, \{\alpha_i\})$ constructed in  the proof of Thm. \ref{Prop_3_2},\,{\rm (A)}.
\end{corollary}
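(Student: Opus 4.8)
The plan is to show that $\mathcal{I}$ and the map $\mathcal{J}$ which assigns to a reparameterized process the extended process built in the proof of Theorem~\ref{Prop_3_2}\,(A) satisfy $\mathcal{J}\circ\mathcal{I}=\mathrm{id}$ and $\mathcal{I}\circ\mathcal{J}=\mathrm{id}$; these two identities give at once that $\mathcal{I}$ is a bijection and that $\mathcal{I}^{-1}=\mathcal{J}$, which is the assertion. By Remark~\ref{rem_uniq} a reparameterized process is completely determined by the pair $(S,\{\alpha_i\})$, and in both constructions the remaining objects ($x$, the $y_i$'s, the $\zeta^r_i$'s, the $w^r_i$'s, $\mu$) are produced from the relevant change of independent variable in a deterministic fashion. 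Hence the argument reduces to matching the intermediate quantities appearing in the proofs of the two parts of Theorem~\ref{Prop_3_2}, using the identities already established there.

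First I would verify $\mathcal{J}\circ\mathcal{I}=\mathrm{id}$. Fix an extended process $(x,\mu,\{w^r\})$, let $(S,\{y_i\},\{\alpha_i\})=\mathcal{I}(x,\mu,\{w^r\})$ be the reparameterized process built through the measures $\mu_i$ and the functions $\phi$, $\eta$ of the proof of part (B), and apply the construction of part (A) to $(S,\{y_i\},\{\alpha_i\})$. The map $\psi$ of \eqref{psi} associated with these $\alpha_i$'s satisfies $\psi\equiv\eta$ by \eqref{important}; hence, by the uniqueness of the right inverse in Lemma~\ref{lem_3_3}, the function $\sigma$ of part (A) coincides with $\phi$, and likewise the $N$-fold extensions $\tilde\psi$, $\tilde\sigma$ coincide with their analogues. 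In particular the jump set $D$ and the grid instants $ih$ play the same role in both descriptions. It then follows that: $\tilde y(\tilde\sigma(t))=x(t)$ off the countable set of jump times, because $y_i(s)=x((i-1)h+\eta(s))$ for $s\notin D$, $\sigma=\phi$, and $\eta(\phi(r))=r$ (as $\eta$ is $1$-Lipschitz, hence continuous, so Lemma~\ref{lem_3_3} applies), whence right continuity on $]0,T[$ and the endpoint values extend the identity to all of $[0,T]$; the functions $\zeta^r_i$ of part (A) coincide with the original ones, since over $[\phi^-(r_j),\phi^+(r_j)]$ the arc $y_i$ was defined in part (B) precisely as the affine reparameterization of $\zeta^{r_j}_i$, the remaining cases being covered by the boundary conditions in Definition~\ref{def_ext}; substituting the definition~\eqref{alpha_i} of the $\alpha_i$'s on $D$ into the part-(A) formula for $w^r_i$ returns the original $w^r_i$'s (both sides vanishing away from the atoms of $\mu$); and the reconstructed measure $t\mapsto\int_0^{\tilde\sigma^+(t)}\tilde\alpha$ coincides with $\mu$, because the Radon--Nikodym definition of the $\alpha_i$'s together with Lemma~\ref{lem_3_3} gives $\int_0^{\phi^+(r)}\alpha_i\,ds=\mu_i([0,r])$, so the distribution functions agree segment by segment (equality of total variations being already \eqref{TV}).

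For the reverse identity $\mathcal{I}\circ\mathcal{J}=\mathrm{id}$ I would argue symmetrically. Starting from a reparameterized process $(S,\{y_i\},\{\alpha_i\})$ with associated $\psi$, $\sigma$ as in part (A), and letting $(x,\mu,\{w^r\})=\mathcal{J}(S,\{y_i\},\{\alpha_i\})$, the measures $\mu_i$ built in part (B) from this extended process satisfy $\mu_i([0,r])=\int_0^{\phi^+(r)}\alpha_i\,ds$ by the same computation, so that $\phi(r)=r+\sum_{i=1}^N\mu_i([0,r])$ satisfies $\psi(\phi(r))=r$; hence $\phi\equiv\sigma$ and $\eta\equiv\psi$. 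The controls $\alpha_i$ rebuilt by part (B) then coincide with the original ones — on $D$ directly from \eqref{alpha_i}, off $D$ because $y_i(s)=x((i-1)h+\eta(s))$ agrees with the arc $x$ produced in part (A) — and by Remark~\ref{rem_uniq} the $y_i$'s coincide as well. This yields $\mathcal{I}(\mathcal{J}(S,\{y_i\},\{\alpha_i\}))=(S,\{y_i\},\{\alpha_i\})$, and combining the two identities proves the corollary.

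The step I expect to be the main obstacle is the bookkeeping around the jump structure: one must check that the non-degenerate constancy intervals of $\psi$ (equivalently, the intervals $[\phi^-(r_j),\phi^+(r_j)]$), the atoms of $\mu$, and above all the ``shared'' instants $t=ih$ — at which a jump of $x$ is governed jointly by $w^h_i$ and $w^0_{i+1}$ on account of the delay — correspond correctly under the two constructions, and that the affine rescalings relating each $\zeta^r_i$ on $[0,1]$ to $y_i$ on $[\phi^-(r),\phi^+(r)]$ are consistently oriented. Once these correspondences are secured, every remaining step is a routine application of Lemma~\ref{lem_3_3} together with the identities \eqref{important}, \eqref{TV} and \eqref{mu_i_rel}.
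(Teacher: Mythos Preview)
Your proposal is correct and follows essentially the same approach as the paper. Both arguments hinge on the key observation (relation~\eqref{important}) that the change of variable $\phi$ built in part~(B) is the right inverse of the map $\psi$ from part~(A), so that $\sigma=\phi$ and $\eta=\psi$; once this identification is made, all remaining objects ($x$, $\mu$, the $w^r_i$'s, the $y_i$'s, the $\zeta^r_i$'s) match up by the formulae already displayed in the proof of Theorem~\ref{Prop_3_2}. The paper's proof is a one-line sketch invoking~\eqref{important} and pointing to Proposition~\ref{P_uniq}, whereas you spell out both compositions $\mathcal J\circ\mathcal I$ and $\mathcal I\circ\mathcal J$; note that for the corollary as stated (and for its only use, in the proof of Theorem~\ref{PMP}) the identity $\mathcal J\circ\mathcal I=\mathrm{id}$ alone suffices, since it already gives injectivity of $\mathcal I$ and $\mathcal I^{-1}=\mathcal J$ on the image.
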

\begin{proof} This assertion is an easy consequence of relation \eqref{important}, which implies that the change of variable $r\mapsto \phi(r)$ (see \eqref{phi}) utilized in the proof of Thm. \ref{Prop_3_2}, statement (B), to obtain $(S, \{y_i\}, \{\alpha_i\})$ from $(x,\mu, \{w^{r}\}_{ r \in [0,h]})$ is the right inverse of the change of variable $s\mapsto \psi(s)$ (see \eqref{psi}) employed in the proof of part {\rm (A)} to construct  an extended process
from $(S, \{y_i\}, \{\alpha_i\})$. (See also the arguments in the proof of Prop. \ref{P_uniq} below).
\end{proof}

\section{Existence, Density and Compactness Properties of Extended Trajectories}  \label{sec4}
We establish some fundamental properties of extended processes.

\begin{proposition}\label{P_uniq} Given an impulsive control $(\mu, \{w^{r}\}_{ r \in [0,h]})$, there exists an extended trajectory $x\in BV([-h,T];\R^n)$ to {\rm (DIS)}  associated with it and it is unique.
\end{proposition}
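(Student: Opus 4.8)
The plan is to reduce the statement to the already-established correspondence between extended and reparameterized processes (Theorem~\ref{Prop_3_2} and Corollary~\ref{Cor_inv}) together with the uniqueness of reparameterized processes recorded in Remark~\ref{rem_uniq}. The key preliminary observation is that the data $S$ and $\{\alpha_i\}$ of the reparameterized process constructed in the proof of Theorem~\ref{Prop_3_2}\,(B) --- namely the auxiliary measures $\mu_i$, the function $\phi$ of \eqref{phi}, the number $S=\phi(h)$, the Radon-Nicodym derivatives $m^c_i$, and the functions $\alpha_i$ of \eqref{alpha}--\eqref{alpha_i} --- depend only on the impulsive control $(\mu,\{w^r\}_{r\in[0,h]})$, and not on any candidate state trajectory $x$.

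For existence, I would form $\mu_i$, $\phi$, $S$, the right inverse $\eta$ of $\phi$, and the controls $\alpha_i$ precisely as in the proof of Theorem~\ref{Prop_3_2}\,(B); the arguments there that $\alpha_i\ge 0$, $\sum_i\alpha_i\le 1$ a.e.\ and $\int_0^S(1-\sum_i\alpha_i)\,ds=h$ (via \eqref{normalize}, \eqref{sum} and \eqref{important}) use only these $x$-free ingredients, so they carry over verbatim. By Remark~\ref{rem_uniq} there is then a unique reparameterized process $(S,\{y_i\},\{\alpha_i\})$ for {\rm (DIS)}, and Theorem~\ref{Prop_3_2}\,(A) applied to it produces an extended process $(x,\tilde\mu,\{\tilde w^r\})$. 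It remains to check that $\tilde\mu=\mu$ and $\tilde w^r=w^r$ for all $r\in[0,h]$, whence $x$ is an extended trajectory associated with $(\mu,\{w^r\})$. Here \eqref{important} is decisive: it shows that the map $\psi$ of \eqref{psi} attached to $\{\alpha_i\}$ coincides with $\eta$, so by Lemma~\ref{lem_3_3} its right inverse $\sigma$ equals $\phi$; substituting $\sigma=\phi$ into the explicit formulas of part (A) for $\tilde\mu$ and $\tilde w^r$, and using Definition~\ref{imp_control_def}\,(i)--(iii) together with the fact that $w^r_i=0$ a.e.\ whenever $r\in\,]0,h[$ is not an atom of $\phi$, collapses these formulas to $\mu$ and $w^r$. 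This is, in essence, Corollary~\ref{Cor_inv} read from the reparameterized side.

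For uniqueness, let $(x,\mu,\{w^r\})$ and $(\hat x,\mu,\{w^r\})$ be two extended processes with the same impulsive control. Applying Theorem~\ref{Prop_3_2}\,(B) to each gives reparameterized processes whose parameters $S,\{\alpha_i\}$ are, by the preliminary observation, identical; hence Remark~\ref{rem_uniq} forces the corresponding $\{y_i\}$ to coincide. Since the (B)-construction sets $y_i(s)=x((i-1)h+\eta(s))$ and, for the second process, $\hat y_i(s)=\hat x((i-1)h+\eta(s))$ for $s\in\,]0,S[\,\setminus D$, and since $\eta$ maps $]0,S[\,\setminus D$ onto the non-atomic points of $]0,h[$, we obtain $x=\hat x$ on $[0,T]$ outside the countable set $\{(i-1)h+r_j\}\cup\{(i-1)h\}$. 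Because extended state trajectories have bounded variation and are right continuous on $]0,T[$, agreement off a countable set propagates to all of $]0,T[$ by taking right limits along points of agreement; finally $x(0)=x_0=\hat x(0)$, $x\equiv\xi_0\equiv\hat x$ on $[-h,0[$, and $x(T)=\zeta^h_N(1)$ with $\zeta^h_N$ determined by $w^h_N$ and the boundary value $\zeta^h_N(0)=x^-(T)=\hat x^-(T)$, so $x=\hat x$ on $[-h,T]$, as required (membership of $x$ in $BV([-h,T];\R^n)$ being automatic from Definition~\ref{def_ext}).

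I expect the main obstacle to be the verification, inside the existence step, that the extended process manufactured from the reparameterized data actually carries the prescribed impulsive control: one must confirm that $\psi$ and $\phi$ are genuine mutual right inverses and that the part-(A) formulas for $\tilde\mu$ and $\tilde w^r$ unwind correctly --- including the bookkeeping at the junction times $ih$, where the atom of $\mu$ is shared between $w^h_i$ and $w^0_{i+1}$. Identity \eqref{important} and the reasoning behind Corollary~\ref{Cor_inv} are precisely what makes this go through; a secondary but necessary point is to be scrupulous that every ingredient of the (B)-construction invoked in the uniqueness argument is a function of $(\mu,\{w^r\})$ alone.
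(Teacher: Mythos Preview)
Your proposal is correct and follows essentially the same strategy as the paper's proof: observe that the reparameterization data $(S,\{\alpha_i\})$ built in part~(B) of Theorem~\ref{Prop_3_2} depend only on the impulsive control, obtain existence by applying part~(A) to the unique reparameterized process and checking via \eqref{important} (equivalently Corollary~\ref{Cor_inv}) that the original impulsive control is recovered, and obtain uniqueness from Remark~\ref{rem_uniq} together with the explicit formulas for the $y_i$'s in part~(B). Your write-up merely spells out in more detail the concluding step---passing from $y_i=\hat y_i$ to $x=\hat x$ via right continuity and the boundary values---which the paper leaves implicit.
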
 
 \begin{proof}  Take an impulsive control $(\mu, \{w^{r}\}_{ r \in [0,h]})$. Define $S>0$ and the controls  $\{\alpha_i\}$ as in the proof of assertion (B) in Thm. \ref{Prop_3_2},  and let $\{y_i\}$ be the corresponding (unique) solution  to the reparameterized system. Then,  with the reparameterized process $(S, \{y_i\}, \{\alpha_i\})$ we can associate an extend process $(\hat x, \hat\mu, \{\hat w^{r}\}_{ r \in [0,h]})$  as in  the proof of assertion (A). Since  $\sigma\equiv\phi$ (we use the notation of the proof), it is not difficult to deduce  that the impulsive control $(\hat\mu, \{\hat w^{r}\}_{ r \in [0,h]})$ actually coincides with  $(\mu, \{w^{r}\}_{ r \in [0,h]})$. Therefore,   $\hat x$ is an extended trajectory associated with the given impulsive control.  Suppose now that $x$ is another $BV$ function such that    $(x,\mu, \{w^{r}\}_{ r \in [0,h]})$ is an extended process. To this process there corresponds the same reparameterized process $(S, \{y_i\},  \{\alpha_i\})$ as above,  since, given $S>0$ and  $\{\alpha_i\}$,   the solution  $\{y_i\}$  is unique. The equality $x=\hat x$ now follows from the  properties of the $y_i$'s constructed in  the proof of  (B). 
 \end{proof}

For any $K>0$, we define the {\em $K$-extended reachable set for {\rm (DIS)},}  as
\begin{equation*}
{\mathcal R}^{{\rm e}}_K(T):= \{ x(T):\,   (x, \mu,  \{w^{r}\}_{ r \in [0,h]}) \mbox{ is extended}\mbox{ process for }{\rm (DIS)}  \mbox{ s.t. }  ||\mu ||_{\mbox{TV}} \leq K\}
\end{equation*}
and the  {\em $K$-strict sense reachable set  for {\rm (DIS)}}, as 
\begin{equation*}
{\mathcal R}^{{\rm s}}_K(T):= \{ x(T):\,   (x, u) \mbox{ is a strict sense }\\
 \mbox{process for }{\rm (DIS)}\mbox{ s.t. } \int_0^Tu(t)\,dt \leq K\}.
\end{equation*}
Clearly, ${\mathcal R}^{{\rm s}}_K(T)$ is  nonempty and  ${\mathcal R}^{{\rm s}}_K(T)\subseteq{\mathcal R}^{{\rm e}}_K(T)$. In control theory an extension of a family of processes is said to be a `relaxation' of that family, if the reachable set of the original family is dense in the reachable set of the extension and if the extended reachable is closed. The role of the extension is then to be `close' to the original family of processes, but to include extra elements ensuring closure properties. Relaxation procedures have had an important role in optimal control and system theory, both in the investigation of conditions that guarantee existence of solutions to optimal control problems and in the derivation of necessary conditions of optimality.  Proposition \ref{Pext} below justifies interpreting extended sense processes for (DIS) as a relaxation of the family of strict sense processes. 
\begin{proposition}\label{Pext}  For any $K>0$,    ${\mathcal R}^{{\rm e}}_K(T)$ is a non-empty compact set  and  ${\mathcal R}^{{\rm e}}_K(T)= \overline{{\mathcal R}^{{\rm s}}_K(T)}$.
\end{proposition}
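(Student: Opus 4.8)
The plan is to use the correspondence between extended processes and reparameterized processes established in Theorem \ref{Prop_3_2}, and to transfer the proof to the reparameterized level, where the system is a standard (non-impulsive) controlled delay differential equation with bounded, Lipschitz data and a compact, convex control constraint. Concretely, I would introduce, for fixed $K>0$, the set
\[
\mathcal{Y}_K := \Big\{ y_N(S):\ (S,\{y_i\},\{\alpha_i\})\text{ is a reparameterized process for (DIS) with }\sum_{i=1}^N\int_0^S\alpha_i(s)\,ds\le K\Big\},
\]
and observe that, by Theorem \ref{Prop_3_2}(A) and (B) together with \eqref{TV}, one has $\mathcal{Y}_K = \mathcal{R}^{\rm e}_K(T)$. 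So it suffices to prove that $\mathcal{Y}_K$ is non-empty and compact, and that it equals the closure of $\mathcal{R}^{\rm s}_K(T)$.

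\textbf{Step 1: Normalising $S$.} The value $S$ is not fixed along a reparameterized process, but it is controlled: since $\int_0^S(1-\sum_i\alpha_i)\,ds=h$ and $\sum_i\int_0^S\alpha_i\,ds\le K$, we get $S\le h+K=:S_{\max}$. I would extend every reparameterized process to the fixed interval $[0,S_{\max}]$ by setting $\alpha_i\equiv 0$ on $]S,S_{\max}]$ for $i=1,\dots,N-1$ and letting $\alpha_N$ on $]S,S_{\max}]$ be any control of total mass making $\int_0^{S_{\max}}(1-\sum_i\alpha_i)=h$ still hold — actually more cleanly, since on $]S,S_{\max}]$ we want $1-\sum\alpha_i$ to integrate to $0$, set $\alpha_N\equiv 1$ there; then $y_N$ on $]S,S_{\max}]$ solves $\dot y_N=g(y_N,y_{N-1})$ and the final point $y_N(S_{\max})$ differs from $y_N(S)$. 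To avoid that, the tidier device is to reparameterize onto $[0,1]$ by a linear rescaling $s\mapsto s/S$, absorbing $S$ into the vector field; this turns the family into controls $(\beta_i)$ on $[0,1]$ with $\sum\beta_i\le 1$ and the scalar $S\in[0,S_{\max}]$ as an extra finite-dimensional parameter, with $\int_0^1(1-\sum_i\beta_i)\,ds = h/S$ — which is awkward because $S$ can be $0$... In fact $S\ge h>0$ always (since $\eta(S)=h$ and $\eta$ is $1$-Lipschitz with $\eta(0)=0$), so $S\in[h,h+K]$, and the rescaling to $[0,1]$ is legitimate. I would carry the pair $(S,\{\beta_i\})$ with $S$ ranging in the compact interval $[h,h+K]$ and $\{\beta_i\}$ in the weak-$*$ compact set $\{\beta\in L^\infty([0,1];\R^N):\beta_i\ge0,\ \sum\beta_i\le1\}$.

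\textbf{Step 2: Compactness.} Take a sequence of reparameterized processes with $y_N^{(k)}(S^{(k)})$ (equivalently $x^{(k)}(T)$) to be shown convergent along a subsequence to a point of the set. Pass to a subsequence so that $S^{(k)}\to S^*$ and $\beta^{(k)}\weak \beta^*$ weak-$*$ in $L^\infty([0,1];\R^N)$; the constraints $\beta_i\ge0$, $\sum\beta_i\le1$, and $\int_0^1(1-\sum\beta_i)=h/S^*$ pass to the limit (the last by testing against the constant function). The states $y_i^{(k)}$ are uniformly bounded and uniformly Lipschitz on $[0,1]$ because $f,g$ are bounded and $S^{(k)}$ is bounded, so by Arzelà–Ascoli (applied successively to $y_1^{(k)},\dots,y_N^{(k)}$, using the cascade structure) a further subsequence converges uniformly to continuous limits $y_i^*$. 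The usual argument for passing to the limit in a control system affine in the state-measurable controls — writing the integral form $y_i^{(k)}(s)=y_i^{(k)}(0)+\int_0^s[S^{(k)}(1-\sum_j\beta_j^{(k)})f(y_i^{(k)},y_{i-1}^{(k)})+S^{(k)}\beta_i^{(k)}g(y_i^{(k)},y_{i-1}^{(k)})]\,ds'$, using uniform convergence of the $y$'s to replace $f(y_i^{(k)},y_{i-1}^{(k)})$ by $f(y_i^*,y_{i-1}^*)$ up to a uniformly small error, and weak-$*$ convergence of $\beta^{(k)}$ against the now-fixed continuous coefficient — yields that $(S^*,\{y_i^*\},\{\beta_i^*\})$ is again a reparameterized process (after rescaling back to $[0,S^*]$), with $\sum_i\int\beta_i^*\le\liminf\sum_i\int\beta_i^{(k)}\le K$ by weak-$*$ lower semicontinuity of the mass. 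Hence $y_N^{(k)}(S^{(k)})\to y_N^*(S^*)\in\mathcal{Y}_K$, proving $\mathcal{R}^{\rm e}_K(T)$ is compact; non-emptiness is clear since $u\equiv 0$ gives a strict sense process.

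\textbf{Step 3: Density.} The inclusion $\overline{\mathcal{R}^{\rm s}_K(T)}\subseteq \mathcal{R}^{\rm e}_K(T)$ follows from $\mathcal{R}^{\rm s}_K(T)\subseteq\mathcal{R}^{\rm e}_K(T)$ and closedness (Step 2). For the reverse, given an extended process with $\|\mu\|_{TV}\le K$, pass to its reparameterized process $(S,\{y_i\},\{\alpha_i\})$. On $[0,S]$ the controls $\alpha_i$ are genuine $L^\infty$ functions; the point is to approximate $(S,\{\alpha_i\})$ by data of strict-sense type, i.e. where $1-\sum_i\alpha_i>0$ a.e. so that $\psi(s)=\int_0^s(1-\sum\alpha_j)$ is strictly increasing and its inverse gives an \emph{absolutely continuous} control $u$ on $[0,T]$. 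I would take $\alpha_i^{(k)}:=(1-1/k)\alpha_i$ (which keeps $\sum\alpha_i^{(k)}\le(1-1/k)<1$), and then rescale the time interval from $[0,S]$ to $[0,S_k]$ with $S_k$ chosen so that $\int_0^{S_k}(1-\sum_i\alpha_i^{(k)}(\text{rescaled}))\,ds=h$ still holds; equivalently work on $[0,1]$ with $\beta_i^{(k)}=(1-1/k)\beta_i$ and $S_k = h/\int_0^1(1-(1-1/k)\sum\beta_j)\,ds$, which converges to $S$. Then $\beta^{(k)}\to\beta$ strongly in $L^1$ and $S_k\to S$, so by continuous dependence of solutions of the reparameterized ODE cascade on $(S,\{\beta_i\})$ (Gronwall, using boundedness and $C^1$-ness of $f,g$) the endpoints $y_N^{(k)}(S_k)\to y_N(S)=x(T)$. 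Finally each $(S_k,\{\alpha_i^{(k)}\})$, having $1-\sum_i\alpha_i^{(k)}\ge 1/k>0$, corresponds via Theorem \ref{Prop_3_2}(A) and the strict monotonicity of $\psi$ to an extended process whose measure $\mu_k$ has \emph{no atoms} — indeed $\psi$ strictly increasing forces $\sigma$ continuous, hence (by the remark after Def. \ref{def_ext}) every $\zeta^r_i$ is constant and $x_k$ is continuous — with $d\mu_k = u_k\,dt$, $u_k\in L^1$, and $\int_0^T u_k = \sum_i\int_0^{S_k}\alpha_i^{(k)}\le\sum_i\int_0^S\alpha_i\le K$. Thus $x(T)\in\overline{\mathcal{R}^{\rm s}_K(T)}$.

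\textbf{Main obstacle.} The delicate point is the bookkeeping around the non-fixed parameter $S$ and the need to keep the normalisation $\int(1-\sum\alpha_i)=h$ exactly satisfied after each perturbation; one must choose the rescaled horizon $S_k$ (or the rescaling onto $[0,1]$) so that this constraint holds on the nose, and then verify $S_k\to S$ and that the rescaled controls still converge appropriately. Once the reparameterized picture is set up correctly, Steps 2 and 3 are the standard compactification/relaxation arguments for bounded, Lipschitz control systems affine in measurable controls, adapted cascade-wise through $y_1,\dots,y_N$; the time-delay structure causes no extra difficulty because, after reparameterization, the delay has become the constant shift by $S$ (or by $1$ on the rescaled interval) linking $y_i$ to $y_{i-1}$, which is exactly why the construction of Section \ref{sec3} was carried out.
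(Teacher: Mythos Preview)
Your proposal is correct and follows essentially the same strategy as the paper: pass to the reparameterized level via Theorem~\ref{Prop_3_2}, obtain compactness from boundedness of the velocity sets and convexity (since the dynamics are affine in the $\alpha_i$), and obtain density by perturbing the controls so that $1-\sum_i\alpha_i>0$ a.e.\ (which makes $\psi$ strictly increasing, hence $\sigma$ Lipschitz, hence the corresponding measure absolutely continuous).  The paper's implementation is a bit tidier on two counts: for density it keeps $S$ fixed by taking $\alpha_0^j(s)=\tfrac{h}{h+\nu/j}\big[\alpha_0(s)+\tfrac{1}{j}\sum_i\alpha_i(s)\big]$, which preserves both $\int_0^S\alpha_0^j=h$ and $\int_0^S\sum_i\alpha_i^j=\nu$ exactly (so no rescaling of the horizon is needed and $\alpha_i^j\to\alpha_i$ in $L^\infty$); and for compactness it augments the reparameterized system with the scalar equations $\dot\tau=1-\sum_i\alpha_i$, $\dot v=\sum_i\alpha_i$ and then invokes a standard compactness-of-trajectories theorem directly, rather than rescaling to $[0,1]$ and carrying the free parameter $S$ separately.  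Your rescaling to $[0,1]$ and the adjustment $S_k$ work, but the bookkeeping you flag as the ``main obstacle'' is precisely what the paper's formulation sidesteps.
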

\begin{proof} Let us first show that $ {\mathcal R}^{{\rm e}}_K(T)\subseteq \overline{{\mathcal R}^{{\rm s}}_K(T)}$. 
Take $z\in {\mathcal R}^{{\rm e}}_K(T)$ and let us consider an extended process $(x, \mu, \{w_i^{r}\}_{ r \in [0,h]})$ with $||\mu ||_{\mbox{TV}} \leq K$ and such that $x(T)=z$. Then, by Thm. \ref{Prop_3_2}, there exists   $(S, \{y_i\}, \{\alpha_i\})$ which is a reparameterized process for {\rm (DIS)} such that $x(T)= y_N (S)$  and $\sum_{i=1}^N\int_0^S\alpha_i(s)\,ds=||\mu ||_{\mbox{TV}}$. Set
$
\alpha_0(s):=1-\sum_{i=1}^N \alpha_i(s) 
$
for $s\in[0,S]$, and $\nu:=\sum_{i=1}^N\int_0^S\alpha_i(s)\,ds$.
For each index value $j\in\N$, $j\ge1$, and a.e. $s\in[0,S]$,    define
$
\alpha_0^j(s):=\frac{h}{h+\frac{\nu}{j}}\left[\alpha_0(s)+\frac{1}{j}\sum_{i=1}^N\alpha_i(s)\right],
$
and 
$$
\alpha^j_i (s) := 
\left\{
\begin{array}{ll}
\ds\left(1-\alpha_0^j(s)\right)\frac{\alpha_j(s)}{\sum_{i=1}^N\alpha_i(s)} & \mbox{if } \sum_{i=1}^N\alpha_i(s)\ne0
\\
\ds\frac{\left(1-\alpha_0^j(s)\right)}{N} &\mbox{if } \sum_{i=1}^N\alpha_i(s)=0
\end{array}
\right., \qquad i=1,\ldots, N.
$$
Notice that $\alpha_0^j(s)+ \sum_{i=1}^N\alpha^j_i(s)=1$  and  $0<\frac{1}{j}\le \alpha_0^j(s)\le1$ a.e. Moreover,  since $S=h+\nu$, 
one has $\int_0^S\left(1- \sum_{i=1}^N\alpha^j_i(s)\right)\,ds=\int_0^S  \alpha_0^j(s)\,ds=h$ and 
$$
\int_0^S\sum_{i=1}^N\alpha^j_i(s)\,ds=\int_0^S  \left(1-\alpha_0^j(s)\right)\,ds=S-h=\nu\le K.
$$
Let $(y_1^j, \dots,y_N^j)$ be the unique solution to the reparameterized system associated with $S$ and the $\alpha_i^j$'s. Then, for any $j\ge1$, the element $(S, \{y_i^j\}, \{\alpha_i^j\})$ is a reparameterized process for {\rm (DIS)} to which by Thm.  \ref{Prop_3_2}  it corresponds a strict sense process $(x^j,u^j)$  such that $x^j(T)=y^j_N(S)$ and $\int_0^Tu^j(s)\,ds=\nu\le K$. 
 We see that, for any $i=1,\dots,N$, the controls $\alpha_i^j$ converge to $\alpha_i$ in the $L^\infty$-norm, so that, by the continuity of the input-output map, 
one has
$ |x^j (T) - x(T)|=|y^j_N (S) - y_N (S)|\to 0$.
Therefore, we have shown that all points in ${\mathcal R}^{{\rm e}}_K(T)$ lie in the closure of ${\mathcal R}^{{\rm s}}_K(T)$. 

\vsm
Let us now prove that  ${\mathcal R}^{{\rm e}}_K(T)$ is compact. Take a  sequence $(z_j)_j\subset {\mathcal R}^{{\rm e}}_K(T)$. For each $j$ then, there exists an extended process $(x^j,\mu^j, \{w^{j, r}\}_{ r \in [0,h]})$ such that $x^j(T)= z_j$ and  $||\mu_j ||_{TV} \leq K$.  By Thm. \ref{Prop_3_2} there exists a reparameterized process  $(S^j, \{y^j_i\}, \{\alpha^j_i\})$ for {\rm (DIS)} such that $y^j_N (S^j)= z_j$. Furthermore, it holds  $||\mu^j ||_{TV} = \underset{i}{\sum}\; \int_0^{S^j} \alpha^j_i (s)ds$. But then one obtains
$
h = \int_{[0,S^j]} (1- \sum_i \alpha^j_i (s)) ds = S^j - ||\mu^j||_{TV}, 
$
so that $S^j \leq h +K$.  Since the sequence  $(S^j)_j$ is bounded, possibly passing  to a subsequence (we do not relabel) we have that $S^j$ converges to some  $S\le h+K$.  Let us now  add to the reparameterized system the equations of time and total variation, denoted by $\tau$ and $v$, respectively, that is we consider the system
\bel{sys_tv}
\ \left\{
\begin{array}{l}
\ds\dot\tau(s)=\Big(1- \sum_{i=1}^{N} \alpha_i (s)\Big) \\
\ds\dot v(s)= \sum_{i=1}^{N} \alpha_i (s)  \\
\ds\dot y_i (s) =\Big(1- \sum_{i=1}^{N} \alpha_i (s)\Big) f(y_i (s),y_{i-1}(s)) + \alpha_i (s)g( y_i(s), y_{i-1}(s)),    \ \
\mbox{$i=1,\ldots, N$}, 
\end{array}
\right. 
\eeq
with the previous boundary conditions on   $\{y_i\}$ and $\tau(0)=v(0)=0$. For each $j\ge 1$, let $(\tau^j, v^j,\{y_i^j\})$ denote the  solution to \eqref{sys_tv}   on $[0,S^j]$, associated with   the  controls $\{\alpha_i^j\}$ and satisfying these endpoint conditions. Notice that  $\tau^j(S^j)=h$ and  $v^j(S^j)\le K$.
We may apply a standard convergence analysis, based on the Compactness of Trajectories Theorem \cite[Thm. 2.5.3]{OptV} and the fact that the velocity sets associated with \eqref{sys_tv} are convex, to show that there exists a solution $(\tau,v, \{y_i\})$ to \eqref{sys_tv} on $[0,S]$, corresponding to some control $\{\alpha_i\}$, 
 such that, along a subsequence (we do not re-label), $(\tau^j, v^j,\{y_i^j\})$ converges uniformly to  $(\tau,v, \{y_i\})$ on $[0,S]$ (possibly extending by a constant, continuous extrapolation all functions to $[0,h+K]$). In particular, $(S, \{y_i\},\{\alpha_i\})$ is a reparameterized process for (DIS) with $\sum_{i=1}^N\int_0^S\alpha_i(s)\,ds\le K$, since 
 $
\tau(S)=\lim_j\tau^j(S^j)=h
$  and $v(S)=\lim_j v^j(S^j)\le K$.
Moreover, $\lim_j z^j=\lim_j  y^j_N (S^j) = y_N (S)$.  Again,  appealing to Thm. \ref{Prop_3_2}, we deduce that there exists an extended process
$(x, \mu, \{w^{r}\}_{ r \in [0,h]})$ such that $x(T)=y_N (S)$ and $\|\mu\|_{TV}\le K$.  This proves that  ${\mathcal R}^{{\rm e}}_K(T)$ is compact and ${\mathcal R}^{{\rm e}}_K(T)\subseteq\overline{{\mathcal R}^{{\rm s}}_K(T)}$. Since ${\mathcal R}^{{\rm s}}_K(T)\subseteq{\mathcal R}^{{\rm e}}_K(T)$,  the compactness of ${\mathcal R}^{{\rm e}}_K(T)$ yields ${\mathcal R}^{{\rm e}}_K(T)=\overline{{\mathcal R}^{{\rm s}}_K(T)}$.
%
\end{proof}

\section{Existence of Optimal  Extended Processes and a Maximum Principle}\label{S_PMP}
We have defined extended processes for the delayed  impulsive control system {\rm (DIS)}. We now consider a related optimal control problem over extended processes for {\rm (DIS)} with terminal cost, involving constraints on the total variation of the measure control and the location of the terminal state. 
\vsm
 
\noindent
$
{\rm (P)} \left\{
\begin{array}{l}
\mbox{Minimize } \Psi(x(T))
\\
\mbox{over extended processes $(x,\mu,  \{w^{r}\}_{ r \in [0,h]})$ for {\rm (DIS)}, satisfying}
\\[1.0ex]
\|\mu\|_{TV} \leq K,
\\[1.5ex]
x(T) \in \T \,.
\end{array}
\right.
$
\vspace{0.05 in}

\noindent
The additional data for this problem comprise a  function $\Psi:\R^n \rightarrow \R$, a  nonempty set $\T \subset \R^n$,  and $K >0$. 
\vsm
\noindent We shall say that an extended process $(x,\mu, \{w^{r}\}_{ r \in [0,h]})$ for {\rm (DIS)}  is {\it admissible} if $x(T) \in \T$ and $\|\mu\|_{TV} \leq K$. An admissible extended process  $(\bar x,\bar \mu,  \{\bar w^{r}\}_{r \in [0,h]})$ for {\rm (DIS)} that minimizes $\Psi(x(T))$ over all admissible extended processes \linebreak $(x,\mu, \{w^r\}_{ r \in [0,h]})$ for {\rm (DIS)} is said to be an {\it optimal extended process}.

\begin{theorem}[Existence of Optimal Controls]\label{Exist}
Consider the optimal control problem {\rm (P)}. Assume  that $\T$ is closed and  $\Psi:\R^n\to\R$ is lower semicontinuous   and that there exists an admissible extended process. Then, there exists an optimal extended process.
\end{theorem}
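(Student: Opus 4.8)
The plan is to use the direct method of the calculus of variations, reducing the problem to the minimization of a lower semicontinuous function over a compact set. The crucial input is already in hand: by Proposition \ref{Pext}, the $K$-extended reachable set $\mathcal R^{{\rm e}}_K(T)$ is a nonempty compact subset of $\R^n$. Since $\T$ is closed, the intersection $\mathcal R^{{\rm e}}_K(T)\cap\T$ is compact, and it is nonempty because the hypothesised admissible extended process has terminal state lying in $\mathcal R^{{\rm e}}_K(T)\cap\T$.

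First I would observe that, directly from the definition of $\mathcal R^{{\rm e}}_K(T)$, the terminal state $x(T)$ ranges over exactly $\mathcal R^{{\rm e}}_K(T)$ as $(x,\mu,\{w^{r}\}_{r\in[0,h]})$ ranges over extended processes for {\rm (DIS)} with $\|\mu\|_{TV}\le K$; imposing in addition $x(T)\in\T$ shows that the infimum of $\Psi(x(T))$ over admissible extended processes equals $\inf\{\Psi(z):z\in\mathcal R^{{\rm e}}_K(T)\cap\T\}$. As $\Psi$ is lower semicontinuous and $\mathcal R^{{\rm e}}_K(T)\cap\T$ is compact and nonempty, $\Psi$ attains its minimum there at some point $\bar z$. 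Unwinding the definition of the reachable set, I would then pick an extended process $(\bar x,\bar\mu,\{\bar w^{r}\}_{r\in[0,h]})$ for {\rm (DIS)} with $\bar x(T)=\bar z$ and $\|\bar\mu\|_{TV}\le K$; since $\bar z\in\T$ this process is admissible, and by construction $\Psi(\bar x(T))=\Psi(\bar z)\le\Psi(x(T))$ for every admissible extended process, so it is an optimal extended process.

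I do not expect a genuine obstacle here: the only substantive ingredient is the compactness and nonemptiness of $\mathcal R^{{\rm e}}_K(T)$, which is precisely the content of Proposition \ref{Pext}, established through the correspondence of Theorem \ref{Prop_3_2} between extended and reparameterized processes together with the Compactness of Trajectories Theorem applied to the augmented reparameterized system \eqref{sys_tv} (whose velocity sets are convex). The one point meriting a line of care is the use of the lower semicontinuity hypothesis on $\Psi$, which is exactly what is needed for the infimum over the compact set $\mathcal R^{{\rm e}}_K(T)\cap\T$ to be attained; no continuity or convexity of $\Psi$ is required.
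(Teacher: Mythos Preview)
Your argument is correct and follows exactly the paper's approach: use Proposition~\ref{Pext} to conclude that $\mathcal R^{{\rm e}}_K(T)\cap\T$ is nonempty and compact, invoke lower semicontinuity of $\Psi$ to obtain a minimizer $\bar z$ on this set, and then pick an admissible extended process with terminal state $\bar z$. The only cosmetic difference is that you spell out why the intersection is nonempty (via the hypothesised admissible process), which the paper leaves implicit.
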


\begin{proof}   Under the stated hypotheses, by Prop. \ref{Pext} the set ${\mathcal R}^{{\rm e}}_K(T)\cap \T$ is non-empty and compact. Since the function $\Psi$ is lower semicontinuous, there exists $\bar z \in {\mathcal R}^{{\rm e}}_K(T)\cap \T$ that minimizes $\Psi$ over this set. Let $(\bar x, \bar \mu,  \{\bar w^{r} \}_{ r \in [0,h]})$ be an admissible extended process  such that $\bar x(T) = \bar z$. Take any admissible extended process $(x, \mu,  \{w^{r}\}_{r \in [0,h]})$ and write $z= x(T)$. Then, by the minimizing property of $\bar z$,
$$
\Psi(\bar x(T))= \Psi(\bar z)\leq \Psi(z)= \Psi(x(T)).
$$
In other words $(\bar x, \bar \mu, \{\bar w^{r}\}_{ r \in [0,h]})$ is a minimizer for (P).
\end{proof}


The maximum principle of this section is derived by transforming the impulsive optimal control problem into a conventional, free end-time optimal control problem, to which we apply standard necessary conditions of optimality. In the following, given a reference extended process $(\bar x,\bar \mu,  \{\bar w^{r}\}_{ r \in [0,h]})$, for any function $h=h(x_1,x_2)$, $(x_1,x_2)\in  \R^n\times\R^n$,  we set  $ \bar h (t):= h(\bar x (t), \bar x (t-h))$ and write $\bar h_{x_1}(t)$, $\bar h_{x_2}(t)$ for the Jacobian of $h$ at $(\bar x (t), \bar x (t-h))$  w.r.t. $x_1$  and $x_2$, respectively. For instance, $ \bar f (t)= f(\bar x (t), \bar x (t-h))$ and $ \bar f_{x_1} (t)= \frac{\partial f}{\partial x_1}(\bar x (t), \bar x (t-h))$. 

\begin{theorem}[Maximum Principle] \label{PMP}
Let  $(\bar x,\bar \mu,  \{\bar w^{r}\}_{ r \in [0,h]})$  be a minimizing process for {\rm (P)}.   For $i  = 1,\ldots,N$ and $r \in [0,h]$, let $\bar\zeta^r_i:[0,1]\rightarrow \R^n$ be the corresponding functions describing the instantaneous evolution of the state of this process, defined by \eqref{jump_ode1} and \eqref{jump_ode2}. Assume that $\T\subseteq\R^n$ is closed and $\Psi:\R^n\to\R$ is a $C^1$ function.  
Then,  there exist $\lambda \geq 0$, $c \in \R$, $d \geq0$ and a function $p\in BV([0,+\infty[; \R^n)$, right continuous on $]0,T[$, with the following properties: $d = 0$ if $\|\bar \mu\|_{TV} < K$ and
\vspace{0.05 in}

\noindent
 (A): $\lambda +\|p\|_{L^\infty}\neq 0$,
\vspace{0.05 in}
 
\noindent
(B):
 for each $i =1,\ldots,N$ and $t \in [(i-1)h, ih]$,
\begin{equation*}
\begin{split}
&p(t)= 
 \eta_i^0 (1), \mbox{ when  $i =2,\dots,N$ and  $t= (i-1)h$,} 
\\
& p(t)= \eta_i^0 (1) - \int_{(i-1)h}^t p(t') \cdot \bar f_{x_1}(t')dt'  - \int_{[(i-1)h , t]} p(t') \cdot \bar g_{x_1}(t') \bar d\mu^{c}(t')  
 \\
 &\ \ - \int_{(i-1)h}^t  p(t'+ h) \cdot \bar f_{x_2}(t'+ h)dt'  - \int_{[(i-1)h , t]}  p(t'+h) \cdot \bar g_{x_2}(t'+h))   \bar d\mu^{c}(t'+h) 
\\
&\ \ - \underset{r \in ]0 , t- (i-1)h] }{\sum} ( \eta^r_i ( 0) - \eta^r_i (1)),
 \mbox{ when  $t\in ](i-1)h,ih[$,}
\end{split}
\end{equation*}
\vspace{-0.1 in}

\noindent
Furthermore,
$$
p(T) = \eta^h_N (1) \mbox{ and }
p(t)= 0 \mbox{ for }t >T. 
$$
Here, for each $r \in [0,h]$,
$(\eta^r_1,\ldots, \eta^r_N) :[0,1] \rightarrow (\R^{n})^N$ satisfies the differential equations

\begin{eqnarray} \label{p_rho}
&&
 \frac{d}{ds} \eta^r_i (s)  \,=\,  -\eta^r_i (s) \cdot g_{x_{1}} (\bar\zeta_i^r(s),\bar\zeta_{i-1}^r (s))\bar w_i^{r}(s)\,
\\
&&\qquad-  \; \eta^r_{i+1} (s) \cdot  g_{x_{2}}(\bar\zeta_{i+1}^r(s),\bar\zeta_{i}^r(s) )\bar w_{i+1}^{r}(s)
\qquad \mbox{ a.e. } s \in [0,1],\ \ i =1,\ldots, N\,, \notag
\end{eqnarray}
with boundary conditions
\begin{equation}
\label{jump_ode2_1}
\eta^r_i (0) =
\left\{
\begin{array}{ll}
p^-(r + (i-1)h) & \mbox{if } r \in ]0,h]
\\
\eta_{i-1}^h (1)&\mbox{if } r =0 \,.
\end{array}
\right.
\end{equation}
In these relations, 
we interpret
$\eta^h_{0} (1) := p(0)$ and $\eta^r_{N+1} (s):= 0$ for all $r \in [0,h]$ and $s \in [0,1]$.

\vspace{0.05 in}

\noindent
 \vspace{0.05 in}

\noindent
 (C): $-p(T) \in \lambda \, \nabla\Psi (\bar x(T)) + N_\T (\bar x(T))$, 
 \vspace{0.05 in}

\noindent
 (D): 
 
 \begin{itemize}
%
 %
%
\item[(i):]
$\underset{j=1}{\overset{N}{\sum}} \,  p(r+(j-1)h)\cdot \bar f(r+ (j-1)h ) - c =  0$ \  for all  \ $r \in ]0,h[$, 

\noindent and also at $r=0$ if $\underset{j=1}{\overset{N}{\sum}}\int_0^1\bar w^0_j(s)\,ds=0$, and   at  $r=h$ if  $\underset{j=1}{\overset{N}{\sum}}\int_0^1\bar w^0_j(s)\,ds=0$ and   $\underset{j=1}{\overset{N}{\sum}}\int_0^1\bar w^h_{j} (s)\,ds=0$, 

\vspace{0.05 in}

  \item[(ii):]
 $p(t)\cdot \bar g(t)-d  \le 0$
 for all $t \in[0,T]$, 
\vspace{0.05 in}

\item[(iii):]   $\mbox{supp}\, \{{\bar \mu} \}\subset\left\{t\in[0,T]: \ \ p(t)\cdot \bar g(t)-d =0\right\}$,
%
 
 \vspace{0.05 in}
 
  \item[(iv):]   for any $r\in[0,h]$  such that $\underset{j=1}{\overset{N}{\sum}}\int_0^1\bar w_j^r(s)\,ds >0$,    
 \begin{itemize}
 \item[(a):] 
 $\mbox{$\eta^r_i (s) \cdot g( \bar\zeta^{r}_i (s),  \bar\zeta^{r}_{i-1} (s))  = \underset{j}{\max}\; \eta^r_j (s) \cdot g( \bar\zeta^{r}_j (s), \bar\zeta^{r}_{j-1} (s))$ }$,

\hspace{1.0 in}  for $\mathcal L$-a.e. $s\in [0,1]$ such that  $\bar w^r_i (s) >0 $. 
 
 \item[(b):]  $\underset{j=1}{\overset{N}{\sum}}  \eta^r_j (s)\cdot  f(\bar\zeta_j^r (s),\bar\zeta_{j-1}^r(s)))  -  c \leq \max_j  \eta_j^r(s)\cdot g(\bar\zeta_j^r (s),\bar\zeta_{j-1}^r(s))  -d =0$
 
 \hspace{3.0 in}  for all $s\in [0,1]$.
 \end{itemize}
 \end{itemize}
%
%

\end{theorem}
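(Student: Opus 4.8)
The plan is to derive the maximum principle by transforming the impulsive problem {\rm (P)} into a conventional free--end--time optimal control problem via the reparameterization of Section~\ref{sec3}, applying a standard nonsmooth maximum principle, and then transcribing the conditions back through the change of variables. Concretely, let $(\bar x,\bar\mu,\{\bar w^{r}\})$ be the given minimizer of {\rm (P)} and let $(\bar S,\{\bar y_i\},\{\bar\alpha_i\})=\mathcal I(\bar x,\bar\mu,\{\bar w^{r}\})$ be the associated reparameterized process provided by Thm.~\ref{Prop_3_2}\,(B) and Cor.~\ref{Cor_inv}, so that $\bar y_N(\bar S)=\bar x(T)\in\T$ and $\sum_i\int_0^{\bar S}\bar\alpha_i\,ds=\|\bar\mu\|_{TV}\le K$. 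Using Thm.~\ref{Prop_3_2}\,(A) in the reverse direction (every reparameterized process produces an extended process with the same terminal value and total variation), one checks that $(\bar S,\{\bar y_i\},\{\bar\alpha_i\})$ globally minimizes $\Psi(y_N(S))$ over all reparameterized processes for {\rm (DIS)} subject to $\sum_i\int_0^S\alpha_i\,ds\le K$ and $y_N(S)\in\T$, with the terminal ``time'' $S>0$ free. Adjoining the equations $\dot\tau=1-\sum_i\alpha_i$, $\dot v=\sum_i\alpha_i$ of \eqref{sys_tv} with $\tau(0)=v(0)=0$, this is a standard free--end--time Mayer problem for the state $(\tau,v,y_1,\dots,y_N)$, with control $\alpha$ in the compact convex polytope $\{\alpha_i\ge0,\ \sum_i\alpha_i\le1\}$, the boundary relations $y_1(0)=x_0$, $y_{i+1}(0)=y_i(S)$, $\tau(S)=h$, $v(S)\le K$, $y_N(S)\in\T$ encoded as a closed endpoint constraint, and cost $\Psi(y_N(S))$; since $f,g$ are $C^1$ and bounded and the reference arc is bounded, the hypotheses of the free--end--time maximum principle (e.g. \cite[Ch.~8]{OptV}) are met.

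Applying that maximum principle produces $\lambda\ge0$, constants $c$ and $d$ (the costates of $\tau$ and $v$; $d\ge0$, and $d=0$ when $\|\bar\mu\|_{TV}<K$ by complementary slackness on $v(S)\le K$), and absolutely continuous costates $q_1,\dots,q_N$ on $[0,\bar S]$, not all trivial, obeying the adjoint equation $-\dot q_i=\partial_{y_i}\mathcal H$ for the pre--Hamiltonian
$$
\mathcal H=\Big(1-\sum_j\alpha_j\Big)\Big(\sum_k q_k\cdot f(y_k,y_{k-1})-c\Big)+\sum_j\alpha_j\big(q_j\cdot g(y_j,y_{j-1})-d\big);
$$
the linking transversality $q_{i+1}(0)=q_i(\bar S)$; the terminal transversality $-q_N(\bar S)\in\lambda\nabla\Psi(\bar y_N(\bar S))+N_\T(\bar y_N(\bar S))$; the constancy $\mathcal H\equiv 0$ (autonomous problem, free end time); and the pointwise maximization of $\mathcal H$ over $\alpha$ along the reference. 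Note that $y_i$ enters $\mathcal H$ both through $(f,g)(y_i,y_{i-1})$ and through $(f,g)(y_{i+1},y_i)$, which is precisely what generates the two families of ``$x_2$''--terms and the $p(\cdot+h)$ contributions in (B).

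I would then transcribe these conditions back. Define $p$ on $[0,T]$ and the functions $\{\eta^r_i\}$ on $[0,1]$ from $q_1,\dots,q_N$ by the very recipe used in the proof of Thm.~\ref{Prop_3_2}\,(A) to build $x$ and $\{\zeta^r_i\}$ from the $y_i$'s: on $[(i-1)h,ih]$ set $p(t):=q_i\big(\sigma(t-(i-1)h)\big)$ off the (countably many) constancy intervals of $\psi$, set $\eta^r_i(s):=q_i\big(\sigma^-(r)+s(\sigma^+(r)-\sigma^-(r))\big)$ over an atom $r$ of $\bar\mu$, extend $p$ by $0$ on $]T,+\infty[$, and put $\eta^h_0(1):=p(0)$, $\eta^r_{N+1}\equiv0$; then $p\in BV([0,+\infty[;\R^n)$ is right continuous on $]0,T[$. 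Rewriting the $q_i$--adjoint equation under this change of variable by Lemma~\ref{lem_3_3} — splitting $d\bar\mu$ into its continuous part $\bar\mu^{c}$ (producing the $d\bar\mu^{c}$ integrals in (B)) and its atoms (producing the sums $\sum_r(\eta^r_i(0)-\eta^r_i(1))$ and, on each atom, after rescaling to $[0,1]$, equations \eqref{p_rho}--\eqref{jump_ode2_1}), and using that block $i+1$ corresponds to a time advance by $h$ (so the $q_{i+1}$--terms become the $p(\cdot+h)$--terms) — yields exactly (B); the terminal transversality is (C) with $p(T)=q_N(\bar S)=\eta^h_N(1)$; and the pointwise maximization of $\mathcal H$ yields (D): wherever $1-\sum_j\bar\alpha_j>0$ the maximum value $\mathcal H=0$ forces $\sum_k q_k\cdot f(y_k,y_{k-1})-c=0$, i.e.\ (D)(i) for a.e.\ $r$, hence for every $r\in]0,h[$ by right continuity and density (the side conditions at $r=0,h$ recording whether those endpoints are constancy points of $\psi$); testing with $\alpha=0$ gives $q_j\cdot g(y_j,y_{j-1})-d\le0$, i.e.\ (D)(ii) (a.e., then everywhere by the same argument), while the structure of the maximizer and the fact that $\operatorname{supp}\bar\mu$ is carried by constancy intervals give (D)(iii); and restricting the maximization to a constancy interval over an atom, where $\sum_j\bar\alpha_j=1$ a.e., and rescaling to $[0,1]$ gives (D)(iv)(a)--(b), the ``$=0$'' there being the value of $\mathcal H$. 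Finally, for nontriviality (A): the maximum principle gives $\lambda+|c|+d+\max_i\|q_i\|_{L^\infty}\neq0$, and if $\lambda=0$ and all $q_i\equiv0$ then (D)(i) on the nonempty set $\{1-\sum_j\bar\alpha_j>0\}$ forces $c=0$ and (D)(iii) together with complementary slackness forces $d=0$, a contradiction; hence $\lambda+\|p\|_{L^\infty}\neq0$.

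The main obstacle will be this transcription step: faithfully rewriting the finite--dimensional $q_i$--adjoint system on $[0,\bar S]$ as the $BV$ costate $p$ on $[0,T]$ together with the jump--adjoint functions $\{\eta^r_i\}$ — correctly generating the delayed--costate terms $p(\cdot+h)$, tracking the left and right limits of $p$ at the atoms of $\bar\mu$, and upgrading the a.e.\ maximization identities to the pointwise statements in (D) — which parallels, but is considerably more delicate than, the change--of--variable computations in the proof of Thm.~\ref{Prop_3_2}. By contrast, verifying that the standard free--end--time maximum principle of \cite{OptV} applies verbatim to the reparameterized problem (free terminal time, the two endpoint integral constraints, $N$ coupled subsystems with mixed boundary conditions) is routine.
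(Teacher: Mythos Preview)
Your plan is correct and follows essentially the same route as the paper: pass to the reparameterized process via Thm.~\ref{Prop_3_2} and Cor.~\ref{Cor_inv}, apply the free end-time maximum principle of \cite[Thm.~8.7.1]{OptV} to the augmented system \eqref{sys_tv}, and then translate the multipliers $(\lambda,c,d,\{q_i\})$ back to $(\lambda,c,d,p,\{\eta^r_i\})$ through the change of variable $s\mapsto\tilde\sigma(t)$ and Lemma~\ref{lem_3_3}. One small slip to fix when you write things out: (D)(ii) comes from testing the maximization with $\alpha=e_j$, not $\alpha=0$ (the latter gives the drift inequality behind (D)(i)); and your nontriviality argument should be phrased directly in the $q$-variables (e.g.\ integrating $\mathcal H\equiv0$ as the paper does) rather than invoking the already-translated conditions (D)(i),(iii).
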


\noindent
{\bf Comments.}
\vspace{0.05 in}

\noindent
{\bf (1):} Condition (B) is the costate equation in integral form for $p$. Its corresponding infinitesimal form, obtained by differentiating across the relevant  equations at points of differentiability is a generalized `advance' differential equation (equivalently, a `delay' differential equation in reverse time), consistent with standard first order necessary optimality conditions for impulsive-free optimal control problems. Notice that  the `attached' controls $s \mapsto  \bar w^r_i (s)$
affect the jumps in the costate function $p$.  In particular, let us observe that $p$ may jump at some $t\in[(i-1)h,ih]$ where $\bar\mu$ (and $\bar x$) does not, as the ODE \eqref{p_rho} involves both the attached controls $\bar w_i^{t-(i-1)h}$ and $\bar w_{i+1}^{t-(i-1)h}$.
\vspace{0.05 in}

\noindent
{\bf (2):}  Condition (D),\,(i)  and   (D),\,(iii) are a version of the `constancy of the Hamiltonian condition' appropriate for the impulsive control problem.  Specifically, the equality in condition (D),\,(i) is satisfied also at $r=0$ if $t=0$ is not an atom for $\bar\mu$ and at  $r=h$ if {\em all} points $t=ih$, $h=1,\dots,N$ are not atoms for $\bar\mu$. 

\vspace{0.05 in}

\noindent
{\bf (3):}  Condition (D),\,(iv), which is related to the Weierstrass condition of classical optimal control, combines with (D),\,(iii) to give  information about the location of the support of the measure  $\bar\mu$.  In particular, condition  (D),\,(iv)  describes properties of the attached controls determining the instantaneous evolution of the state at atoms of $\bar\mu$. 

\vspace{0.05 in}

\noindent
{\bf (4):}  In view of the nontriviality relation (A), the transversality condition (C) and the linearity of the adjoint equations in (B), if the final point $x(T)$ is unconstrained we can choose $\lambda =1$.

\vspace{0.05 in}

\noindent
\begin{proof}[Proof of Thm. \ref{PMP}] Let   $(\bar S, \{\bar y_i\}, \{\bar\alpha_i\})$ be the reparameterized process associated with the optimal extended process  $(\bar x, \bar\mu, \{\bar w^{r}\}_{r \in [0,h]})$, constructed as in the proof of  Thm.  \ref{Prop_3_2}. Define 
$\bar \tau(s):=\int_0^s\Big(1-\underset{j=1}{\overset{N}{\sum}}\bar\alpha_j(s')\Big)\,ds'$ and $\bar v(s):=\int_0^s\underset{j=1}{\overset{N}{\sum}}\bar\alpha_j(s'))\,ds'$ for $s\in[0,\bar S]$,
and
$$
\Lambda^{(N)}:=\Big\{(a_1,\dots,a_N)\in\R^N: \quad a_j\ge 0 \text{ for each $j=1,\dots,N$, and } \sum_{j=1}^N a_j\le 1\Big\}.
$$
  It can be deduced from  Thm.  \ref{Prop_3_2} that  $(\bar S, \bar\tau,\bar v,\{\bar y_i\}, \{\bar\alpha_i \})$  is a minimizer for the following  optimal control problem: 
 $$
\left\{\begin{array}{l}
\mbox{Minimize } \Psi(y_N (S))
\\
\mbox{over controls $S>0$, $\alpha_i \in L^\infty (0,S)$,  and functions }\\
(\tau_i,v_i,y_i) \in W^{1,1}([0,S];\R^{1+1+n}),  \text{ for $i=1,\ldots, N$,    that satisfy} 
\\
\ds\dot\tau(s)=1-\sum_{j=1}^N \alpha_j(s) \qquad \text{a.e. $s\in[0,S]$,}
\\
\dot v(s)=\sum_{j=1}^N \alpha_j(s) \qquad \text{a.e. $s\in[0,S]$,}\\
\dot y_i (s) =(1- \sum_{j=1}^{N} \alpha_j (s)) f(y_i (s),y_{i-1}(s)) + \alpha_i (s)g( y_i(s), y_{i-1}(s))
\\ 
\hspace{2.5 in} \mbox{a.e. $ s \in [0,S]$, for $i=1,\ldots, N$}, \\ 
(\alpha_1(s),\dots,\alpha_N(s))\in \Lambda^{(N)} \quad \mbox{a.e. } s\in [0,S] ,
\\
y_1 (0)=x_0 \mbox{ and }  y_{i+1}(0) =y_{i}(S), \mbox{ for } i=1,\ldots , N-1, \quad y_{0}\equiv \xi_0,
\\
\tau(0)=0, \quad v(0)=0, \quad  \tau(S)=h, \quad  v(S)\leq K, \quad
y_N(S) \in \T.
\end{array}
\right.
$$
This is a standard `free end-time' optimal control problem for which necessary conditions are well known (see, e.g. \cite[Thm. 8.7.1]{OptV}). We deduce existence of $\lambda\geq 0$, $c\in \R$,  $d \geq 0$ ($-c$ and $-d$ is the Lagrange multiplier associated with $\bar\tau$ and $\bar v$, respectively) and absolutely continuous functions $q_i: [0,\bar S] \rightarrow \R^n$ such that
\vsm
\noindent
 (a): $|c|+d+\lambda+ \|(q_1, \dots,q_N)\|_{L^\infty} \not= 0$,
 \vspace{0.2 in}

\noindent
 (b): for each $i =1, \ldots, N$ and a.e. $s \in [0,\bar S ]$
 \vspace{0.05 in}

$- \dot q_i(s)=  q_i(s) \cdot f_{ x_1}( \bar y_i (s),\bar y_{i-1}(s) ) (1-  \underset{j=1}{\overset{N}{\sum}} \, \bar \alpha_{j }(s))$

\hspace{0.5 in} 
$  + q_i(s) \cdot 
g_{x_1}(\bar y_i (s),\bar y_{i-1}(s))\bar\alpha_i (s)+ q_{i+1}(s) \cdot 
g_{x_2}(\bar y_{i+1} (s),\bar y_{i}(s)) \bar\alpha_{i+1} (s)$,


\hspace{0.5 in} 
$+  \;q_{i+1}(s) \cdot f_{x_2}( \bar y_{i+1} (s),\bar y_{i}(s) ) (1-  \underset{j=1}{\overset{N}{\sum}} \, \bar \alpha_j (s))$

 \vspace{0.05 in}
 \noindent 
 
where  $q_{N+1}\equiv0$.

 \vspace{0.2 in}

\noindent
 (c): $-q_N(\bar S) \in \lambda \nabla \Psi (\bar y_N (\bar S)) + N_\T(\bar y_N (\bar S)) $ and, for  $i =1, \ldots, N-1$,
 
\hspace{0.2 in}$q_i(\bar S)= q_{i+1}(0)$,
 \vspace{0.2 in}

\noindent
 (d): for  a.e. $s \in [0,\bar S]$,\footnote{Actually, the second  equality involving the maximized Hamiltonian is satisfied for {\em all} $s\in[0,\bar S]$,  by the continuity of the  functions $s  \mapsto  q_i(s)\cdot g(\bar y_i (s), \bar y_{i-1} (s) )$ and $s \mapsto  q_i(s)\cdot f(\bar y_i (s), \bar y_{i-1} (s) )$.}
 $$
 H(\{\bar y_i(s)\},c,d,\{q_i(s)\},\{\bar\alpha_i(s)\}) =
 \underset{\{a_i\} \in \Lambda^{(N)}}{\max}  H(\{\bar y_i(s)\},c,d,\{q_i(s)\},\{a_i\})=0,
 $$
%
%
%
%
%
%
%
%
where
$$
\begin{array}{l}
 H(\{y_i\},c,d,\{q_i\},\{a_i\})=H(y_0,y_1,\dots,y_N,c,d,q_1,\dots,q_N,a_1,\dots,a_N) \\
\qquad:= -c (1-  \sum_{j=1}^{N} a_j ) -d   \sum_{j=1}^{N} a_j \\
\qquad\qquad+ \sum_{j=1}^{N} \, q_j \cdot\Big( f( y_j ,  y_{j-1})(1-  \sum_{j=1}^{N} a_j ) +  g( y_j ,y_{j-1}(s))a_j   \Big).
\end{array}
$$
Notice that  the non-triviality condition  (a) can be replaced   by 
\vsm
\noindent
 (a)$'$: $ \lambda+ \|(q_1, \dots,q_N)\|_{L^\infty} \not= 0$.
\vsm
 \noindent To prove (a)$'$, suppose to the contrary that $(q, \lambda)=(0,0)$ but $|c|+d \not=0$. Then, integrating condition (d) we obtain that  $-c\,\bar\tau(\bar S)-d\,\bar v(\bar S)=-c\,h-d\,\bar v(\bar S)=0$, so 
 $c=-\frac{\bar v(\bar S)}{h}\,d\le0$. However, for  $a_1=\dots=a_N=0$  the maximality condition in (d) implies that $c\ge0$. Hence $c=d=0$ and we get a contradiction (we remember that, if $\bar v(\bar S)<K$, then $d=0$). 


\vspace{0.05 in}

As we shall see, the assertions of Theorem \ref{PMP} are consequences  of relations (a)$'$--(d) concerning the re-parameterized process $(\bar S, \{\bar y_i\}, \{\bar\alpha_i \})$, re-expressed in terms of the original minimizing extended process $(\bar x,\bar \mu, \{\bar w^{r}\}_{ r \in [0,h]})$.

Let the continuous mapping  $\tilde\psi: [0,N\bar S] \rightarrow \R$ and its right inverse $\tilde\sigma:[0,T]\to[0,N\bar S]$ be as in the proof of  Thm.  \ref{Prop_3_2}, when $\{\alpha_i\} = \{\bar \alpha_i\}$.  Precisely, let  $\tilde\psi(s)=(i-1)h +\psi(s-(i-1)\bar S)$ for any $s\in[(i-1)\bar S, i\bar S]$ and $i=1,\dots,N$, where
$
\psi(s) =   \int_0^s (1 -  \sum_{j=1}^{N}  \bar \alpha_j (s') )d s'
$
for  $s \in [0,\bar S]$.
Observe that, in view of Cor. \ref{Cor_inv}   one has $\bar x(t)=\tilde y(\tilde\sigma(t))$ for any $t\in[0,T]$, 
where
\[
\tilde y(s)=
\begin{cases}
\bar y_i(s-(i-1)\bar S) \qquad &\text{if $s\in[(i-1)\bar S,i\bar S]$ for some $i=1,\dots,N$,}\\
\xi_0 \qquad &\text{if $s<0$.}
\end{cases}
\]
Note also that the measure $\bar \mu$ has distribution 
\bel{mu_sigma}
\bar \mu([0,t])= \int_{0}^{\tilde\sigma^+ (t)} \tilde \alpha (s)  ds \qquad \text{for $t\in[0,T]$},
\eeq
where $\tilde\alpha(s)=\bar\alpha_i(s-(i-1)\bar S)$ for any $i=1,\dots,N$ and $s\in[(i-1)\bar S,i\bar S[$.
Moreover, for each $i \in  \{ 1,\dots, N \}$ and $r \in [0,h]$, one has
\begin{equation}
\label{relationx_y}
\left\{
\begin{array}{l}
\bar w_i^r (s) = (\sigma^+ (r) - \sigma^- (r))\,    \bar \alpha_i (\sigma^-(r) + s(\sigma^+ (r) - \sigma^- (r)) )\, \mbox{ a.e.  } s \in [0,1],
\\[1,5ex]
\bar \zeta_i^r (s) = \bar y_i  
(\sigma^-(r) + s(\sigma^+ (r) - \sigma^- (r)) ), \; \mbox{ all } s \in [0,1]\,,
\end{array}
\right.
\end{equation} 
where $\sigma$ denotes the right inverse of $\psi$ above.
Here, the $\bar\zeta^r_i$'s are as in the theorem's statement. 
 For each $i=1,\dots,N$ and $r\in[0,h]$, we set
$
\eta_i^r (s) = q_i (\sigma^-(r) + s(\sigma^+ (r) - \sigma^- (r) )) 
$
for all $s \in [0,1]$.
The $\eta_i^r$'s are the functions used in the theorem statement to calculate the jumps in the costate.   Moreover, 
we introduce the continuous function $\tilde q:[0,N\bar S]\to \R^n$, given by
$\tilde q(s) := q_i(s-(i-1)\bar S)$ for any $s\in [(i-1)\bar S,i\bar S]$ and $i=1,\dots,N$,
and define   $p:[0,+\infty[\to \R^n$ by  $p(t):=\tilde q(\tilde\sigma(t))$ for $t\in[0,T]$ and $p(t)=0$ for $t>T$.
Notice that $p$ is right continuous on $]0,T[$. Since $\tilde\sigma(T)=N\bar S$, our construction together with relation (c)   yields condition (C) in the theorem  statement. 

By the above definitions, it immediately follows that $p(T)=\eta_N^h(1)$, $p((i-1)h)=\eta_i^0(1)$ for any $i=2,\dots,N$ (since $\tilde\sigma((i-1)h)=i\bar S +\sigma^+(0)$), and $p(0)=\eta_1^0(0)$. Moreover, it is straightforward to check that the functions $\eta_i^r$ satisfy \eqref{p_rho}-\eqref{jump_ode2_1}. In particular, for the boundary condition \eqref{jump_ode2_1} we use that $q_{i}(h)=q_{i+1}(0)$ for $i=1,\dots,N-1$.

Condition (A) of the theorem's statement follows by relation (a)$'$ above. Indeed, by (a)$'$ and the very definition of $\tilde q$ we get $\lambda+\|\tilde q\|_{L^\infty} \neq 0$. Hence, if it were $\lambda+\|p\|_{L^\infty}=0$, then $\lambda=0$ and $\tilde q\equiv 0$ on $[0,N\bar S]\setminus D$, as for any $s\in [0,N\bar S]\setminus D$ there exists some $s\in[0,T]$ such that $s=\tilde\sigma(r)$. In the previous relations, $D=\cup_k [\tilde\sigma^-(r_k), \tilde\sigma^+(r_k)]$, where $ \{r_k\}$ is the countable set of jumps of $\bar\mu$. By continuity, $\tilde q\equiv 0$ on the boundary of $[0,N\bar S]\setminus D$, so that $\eta_i^r(0)=\eta_i^r(1)=0$ for any $i=1,\dots,N$ and each $r\in[0,h]$. Accordingly, the linearity of the adjoint equations
yields that $\eta_i^r\equiv0$ for any $i=1,\dots,N$ and any $r\in[0,h]$, which implies $\tilde q \equiv 0$ on $D$. A contradiction.

Let $t\in](i-1)h, ih[$ for some $i=1,\dots,N$, then condition (b) implies
\begin{equation*}
\begin{split}
&p(t) = \tilde q(\tilde\sigma(t)) = \tilde q(\tilde\sigma((i-1)h)) + \int_{\tilde\sigma((i-1)h)}^{\tilde\sigma(t)} \frac{d\tilde q}{ds}  (s) \, ds= \eta_i^0(1) \\
& \qquad - \int_{\tilde\sigma((i-1)h)}^{\tilde\sigma(t)} [\tilde q(s) \cdot f_{x_1}(\tilde y(s), \tilde y(s-\bar S)) + \tilde q(s+\bar S)\cdot f_{x_2}(\tilde y(s+\bar S), \tilde y(s))] \frac{d\tilde\psi}{ds}(s) \, ds \\
&\qquad- \int_{\tilde\sigma((i-1)h)}^{\tilde\sigma(t)} [\tilde q(s) \cdot g_{x_1}(\tilde y(s), \tilde y(s-\bar S))\tilde\alpha(s) \\
&\qquad\qquad\qquad+ \tilde q(s+\bar S)\cdot g_{x_2}(\tilde y(s+\bar S), \tilde y(s))\tilde\alpha(s+\bar S)]  ds,
\end{split}
\end{equation*}
in which $\tilde q\equiv0 $ for $s>N\bar S$.
Employing analogous analysis to that earlier
   used in the proof of  Thm.  \ref{Prop_3_2} part (A) (based on Lemma \ref{lem_3_3}), we deduce   the `advance' differential equation in integral form in condition (B).

\noindent
The second equality in condition (d) tells us:
 for all $s\in[0,\bar S]$ and   $(a_1,\dots,a_N)\in\Gamma^{(N)}$,
\begin{equation*}
\begin{split}
&\Big(-c+\underset{j=1}{\overset{N}{\sum}} q_j(s)\cdot f(\bar y_j(s),\bar y_{j-1}(s))\Big)\Big(1-\underset{j=1}{\overset{N}{\sum}}a_j\Big)\\
&\qquad\qquad\qquad\qquad\qquad\qquad+\underset{j=1}{\overset{N}{\sum}}\Big(-d+ q_j(s)\cdot g(\bar y_j(s),\bar y_{j-1}(s))\Big)\,a_j\le 0.
\end{split}
\end{equation*}
Choosing $(a_1,\dots,a_N)=e_j$ for some element $e_j$ of the canonical basis of $\R^N$, we get  
\bel{diseq_imp}
 -d+ q_j(s)\cdot g(\bar y_j(s),\bar y_{j-1}(s)) \le 0 \quad \forall s\in[0,\bar S], \ \ \forall j=1,\dots,N,
\eeq
while the choice  $(a_1,\dots,a_N)=0$ yields
$
-c+\underset{j=1}{\overset{N}{\sum}} q_j(s)\cdot f(\bar y_j(s),\bar y_{j-1}(s))\le0$ for all $s\in[0,\bar S]$.
Take $s\in[0,\bar S]$ and  let $(\bar a_1,\dots,\bar a_N)\in\Lambda^{(N)}$ be a point where the maximum in condition (d) is obtained. Then,     $\max_{j=1,\dots,N}\left\{ -d+ q_j(s)\cdot g(\bar y_j(s),\bar y_{j-1}(s))\right\}=0$  if  $(\bar a_1,\dots,\bar a_N)\ne0$ (actually, $-d+ q_i(s)\cdot g(\bar y_i(s),\bar y_{i-1}(s))=0$ for each $i$ such that $\bar a_i>0$),   $-c+\underset{j=1}{\overset{N}{\sum}} q_j(s)\cdot f(\bar y_j(s),\bar y_{j-1}(s))=0$  if $\underset{j=1}{\overset{N}{\sum}}\bar a_j<1$, and, for any  $s\in[0,\bar S]$,
\bel{max_i}
\max\Big\{\underset{j=1}{\overset{N}{\sum}} q_j(s)\cdot f(\bar y_j(s),\bar y_{j-1}(s))-c\,,\, \max_{j=1,\dots,N} q_j(s)\cdot g(\bar y_j(s),\bar y_{j-1}(s))-d\Big\}=0.
\eeq
It follows from the above relations and the very definition of $\{\bar\alpha_i\}$ that there exists a measurable set $\mathcal{N}$ of zero Lebesgue measure, such that, for all $s\in[0,\bar S]\setminus \mathcal{N}$:
\begin{gather}
\underset{j=1}{\overset{N}{\sum}} \bar \alpha_j (s) >0 \ \implies \ \max_{j=1,\dots,N} q_j(s)\cdot g(\bar y_j (s),\bar y_{j-1} (s) )-d = 0, \label{supp_ineq_a_1} \\
\bar \alpha_i (s) >0 \ \text{for some  $i \in \{1,\dots,N\}$} \ \implies \ q_i(s)\cdot g(\bar y_i (s),\bar y_{i-1} (s) )-d = 0,  \label{supp_ineq_a_2}\\
\underset{j=1}{\overset{N}{\sum}} \bar \alpha_j (s)<1 \ \implies \  \underset{j=1}{\overset{N}{\sum}}  q_j (s) \cdot f(\bar y_j (s), \bar y_{j-1} (s)) -c= 0. \label{supp_ineq_a_3}
\end{gather}
Expressing  \eqref{diseq_imp} and \eqref{supp_ineq_a_2}
 in terms of $\tilde q$ and $\tilde y$,
  we obtain   
\bel{tilde_g<0}
 \tilde q(s) \cdot g(\tilde y (s), \tilde y(s-\bar S)) -d\le0 \quad \text{for all } s\in[0,N\bar S],
\eeq
and
\bel{tilde_g=0}
 \tilde q(s) \cdot g(\tilde y (s), \tilde y(s-\bar S)) -d=0 \qquad \text{for a.e.  } s\in[0,N\bar S] \  \text{such that} \ \tilde\alpha(s)>0,
\eeq
 respectively. Inserting into these relations  $s=\tilde\sigma(t)$  for any $t\in[0,T]$, from \eqref{tilde_g<0} we derive immediately  condition (D),\,(ii). As for  (D),\,(iii), take any $t \in \mbox{supp}\, (\bar\mu )$. Set $s:=\tilde\sigma(t)$. From \eqref{mu_sigma} it follows that the set
$
 \{s' \in [0,N\bar S] \, :\,  \tilde\alpha (s')>0 \mbox{ and } |s'- s|\leq \epsilon \} 
$
has positive Lebesgue measure  for every $\epsilon >0$. So,    by (\ref{tilde_g=0})  and `continuity' we get
 that $\tilde q(s)\cdot g(\tilde y (s), \tilde y (s-\bar S) )-d=0$.   Since  $s:=\tilde\sigma(t)$, this yields condition (D),\,(iii). 


Let $r \in ]0,h[$ and write $s = \sigma^+(r)$.  Then $s < \bar S $ and consequently the set 
$
 \{s' \in [0, \bar S]\, :\, \sum_i \bar \alpha_i (s') <  1\mbox{ and } |s'- s|\leq \epsilon \} 
$
has positive Lebesgue measure for all $\epsilon >0$. Using (\ref{supp_ineq_a_3}), we can show that, for some point arbitrarily close to $s$ and therefore, by continuity,   at $s$ itself,
\bel{qperf}
\underset{j=1}{\overset{N}{\sum}}  \tilde q(s+(j-1)\bar S) \cdot f(\tilde y (s+(j-1)\bar S), \tilde y(s +(j-2)\bar S)) -c= 0 \,.
\eeq
Since $s=\sigma^+(r)$ and $\tilde\sigma^+(r+(j-1)h)=\sigma^+(r)+(j-1)\bar S$, this implies condition (D),\,(i).  Notice that if $\underset{j=1}{\overset{N}{\sum}}\int_0^1\bar w^0_j(s)\,ds=\sigma^+(0)-\sigma^-(0)=\sigma^+(0)=0$, then the equality in condition (D),\,(i) is valid also at $r=0$.  Similarly, it is satisfied also at $r=h$ whenever both  $\underset{j=1}{\overset{N}{\sum}}\int_0^1\bar w^0_j(s)\,ds=0$ and  $\underset{j=1}{\overset{N}{\sum}}\int_0^1\bar w^h_j(s)\,ds=\sigma^+(h)-\sigma^-(h)=\bar S-\sigma^-(h)=0$. In this case indeed, \eqref{qperf} is satisfied for $s=\bar S=\sigma^-(h)$ and $\tilde\sigma(ih)=i\bar S$   by \eqref{2_2_0ih}.

Now take any $r\in [0,h]$ such that  $\sum_{i=1}^N\int_0^1\bar w^r_i(s)\,ds>0$.  Then $\sigma^+ (r)> \sigma^- (r)$ and $\sum_j \bar \alpha_j (s') =1$, for a.e. $s' \in [\sigma^- (r), \sigma^+ (r)]$. It follows from (\ref{max_i})--(\ref{supp_ineq_a_3}) and `continuity' that 
$
\sum_j\, q_j(s)\cdot f(\bar y_j (s), \bar y_{j-1}(s))-c \leq  \max_j\, q_j(s)\cdot g( \bar y_j (s), \bar y_{j-1}(s))-d=0  
$
for all  $s \in [\sigma^-  (r),\sigma^+ (r)]$ and, for any $i$  such that $\bar \alpha_i (s) >0$,
$
 \,   q_i(s)\cdot g(\bar y_i (s), \bar y_{i-1}(s))= \max_j\, q_j (s)\cdot g(\bar y_j (s), \bar y_{j-1}(s)
$
for  a.e.   $s \in [\sigma^-  (r),\sigma^+ (r)]$.
It follows now from (\ref{relationx_y}) and the very definition of $\eta_i^r$ that condition (D), (iv) is satisfied.
\end{proof}

\end{document}